\newtheorem{theorem}{Theorem}[section]
\newtheorem{lemma}[theorem]{Lemma}
\theoremstyle{definition}
\newtheorem{example}[theorem]{Example}
\newtheorem{proposition}[theorem]{Proposition}
\newtheorem{remark}[theorem]{Remark}
\numberwithin{equation}{section}
\newcommand{\spin}{\mathrm{Spin}}
\newcommand{\trace}{\mathrm{tr}}
\newcommand{\ricci}{\mathrm{Ric}}
\title{Existence of cohomogeneity one Einstein metrics}
\author{Hanci Chi}
\address{Department of Pure Mathematics\\ Xi'an Jiaotong-Liverpool University\\ Suzhou 215123\\ China}
\email{hanci.chi@xjtlu.edu.cn}
\begin{document}
\maketitle
\begin{abstract}
This paper derives a sufficient condition for the existence of cohomogeneity one Einstein metrics on double disk bundles of two summands type. The condition is an inequality that involves geometric data from the principal orbits. 
\end{abstract}
\let\thefootnote\relax\footnote{2020 Mathematics Subject Classification: 53C25 (primary).

Keywords: Einstein metric, cohomogeneity one metric.

The author is supported by the NSFC (No. 12071489, No. 12301078), the Natural Science Foundation of Jiangsu Province (BK-20220282), and the XJTLU Research Development Funding (RDF-21-02-083).}

%\tableofcontents

\section{Introduction}
A Riemannian metric $g$ is Einstein if $\ricci(g)=\Lambda g$ for some constant $\Lambda$. The search for Einstein metrics continues to be a challenging problem. According to the sign of $\Lambda$, one speaks of positive ($\Lambda>0$), Ricci-flat ($\Lambda=0$), or negative ($\Lambda<0$) Einstein metrics.

A milestone in this field is the existence theory of Kähler--Einstein metrics, with fundamental results including the resolution of the Calabi conjecture in the non-positive case \cite{aubin1976equations, yau1978ricci} and the Yau--Tian--Donaldson conjecture for Fano manifolds \cite{chen2015kahler,tian2015k}. While these results have profoundly shaped our understanding of Einstein metrics, many challenges remain in the non-Kähler setting. Significant efforts have been devoted to constructing Einstein metrics in the real Riemannian context. One fruitful approach involves considering homogeneous spaces, where the high degree of symmetry simplifies the Einstein equation to an algebraic system. In the case of homogeneous spaces, a general existence theory has been developed in \cite{wang1986existence, bohm2004variational, bohm2004homogeneous}. It is shown in \cite{bohm2006low} that each compact simply connected homogeneous space of dimension less than $12$ admits a homogeneous Einstein metric. For low dimensional homogeneous Einstein manifolds up to dimension $7$, see \cite{jensen1969homogeneous, gorbatsevich1977classification, alekseevsky1996homogeneous, nikonorov1999six, nikonorov2003compact, nikonorov2004compact}. 
 
To explore Einstein metrics with large symmetry but beyond homogeneity, a natural direction is cohomogeneity one Einstein metrics, where the isometry group acts with principal orbits $\mathsf{G}/\mathsf{K}$ of codimension one. The cohomogeneity one condition transforms the Einstein equation to a system of ODEs. A landmark example is Page’s metric on $\mathbb{CP}^2 \sharp\overline{\mathbb{CP}}^{2}$, the first known inhomogeneous positive Einstein metric \cite{page_compact_1978}. Page's metric was generalized in a series of works including \cite{berard-bergery_sur_1982, koiso1986non, page1987inhomogeneous, koiso_non-homogeneous_1988, wang_einstein_1998}. In \cite{bohm_inhomogeneous_1998}, new cohomogeneity one Einstein metrics were found on some low dimensional manifolds, including spheres and $\mathbb{HP}^2 \sharp\overline{\mathbb{HP}}^{2}$. The Einstein metric on the latter space can be viewed as a quaternionic counterpart of Page's metric. B\"ohm's metric is generalized to all high dimensional $\mathbb{HP}^{m+1} \sharp\overline{\mathbb{HP}}^{m+1}$ in \cite{chi2024positive}, confirming numerical evidence in \cite{page1986einstein}. 

This paper further extends the existence result in \cite{chi2024positive} to a broader class of closed cohomogeneity one manifolds.  Specifically, the principal orbit is the total space of a homogeneous fibration
\begin{equation}
\label{eqn_homogeneous fibration}
\mathbb{S}^{d_1}=\mathsf{H}/\mathsf{K}\hookrightarrow \mathsf{G}/\mathsf{K}\rightarrow \mathsf{G}/\mathsf{H}=\mathcal{Q}^{d_2}.
\end{equation}
The compact cohomogeneity one manifold $M$ of dimension $n+1=d_1+d_2+1$ is an $\mathbb{S}^{d_1+1}$-bundle over $\mathcal{Q}^{d_2}$. Since the manifold $M$ is formed by gluing two copies of a disk bundle over $\mathcal{Q}^{d_2}$ along $\mathsf{G}/\mathsf{K}$, it is also referred to as a \emph{double disk bundle} associated to the group triple $(\mathsf{K},\mathsf{H},\mathsf{G})$. We consider double disk bundles of \emph{two summands type}, where the cohomogeneity one Einstein equation admits an ansatz consisting of two metric components $f_1^2$ and $f_2^2$, respectively for $\mathfrak{h}/\mathfrak{k}$ and $\mathfrak{g}/\mathfrak{h}$. Cases where the isotropy representations of principal orbits split into two irreducible summands are included. The cohomogeneity one Einstein equation on $M$ is characterized by the structural triple $(d_1,d_2,A)$ associated with $\mathsf{G}/\mathsf{K}$, where the constant $A\geq 0$ is the norm square of the fundamental tensor in the theory of Riemannian submersions \cite{o1966fundamental}. With such a geometric setting, we prove the following existence theorem.
\begin{theorem}
\label{thm_main 1}
For any $(d_1,d_2)$ with $d_1,d_2\geq 2$, there exists a constant $\chi_{d_1,d_2}\in \left(0,\frac{d_2(d_2-1)^2}{d_1^2(d_1d_2-d_2+4)}\right]$ such that if $\mathsf{G}/\mathsf{K}$ is a principal orbit with the structural triple $(d_1,d_2,A)$ and $A\in (0,\chi_{d_1,d_2})$, there exists at least one cohomogeneity one Einstein metric on $M$.
\end{theorem}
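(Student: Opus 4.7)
The plan is to translate the cohomogeneity one Einstein condition into an ODE system in arclength $t$ for two metric components $f_1(t)$ on $\mathfrak{h}/\mathfrak{k}$ and $f_2(t)$ on $\mathfrak{g}/\mathfrak{h}$, with the Ricci tensor of the principal orbit depending explicitly on $(d_1,d_2,A)$: the constant $A$ contributes a term of the form $A\,f_2^2/f_1^4$ to the horizontal Ricci component and $-A/f_1^2$ to the vertical one (standard O'Neill formulas). The Einstein equations reduce to two second-order equations together with a first-order Hamiltonian constraint. Smoothness at $t=0$, where the $\mathsf{H}/\mathsf{K}=S^{d_1}$-fiber collapses onto $\mathsf{G}/\mathsf{H}$, forces the initial data $f_1(0)=0$, $\dot f_1(0)=1$, $f_2(0)=c>0$, $\dot f_2(0)=0$. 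After normalizing the Einstein constant, the solution is then parametrized by the single shooting parameter $c$.

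Since $M$ is a \emph{double} disk bundle with isomorphic singular orbits at the two endpoints of the interval, the strategy is to produce a $\mathbb{Z}_2$-symmetric solution: a time $t^\ast>0$ at which $\dot f_1(t^\ast)=\dot f_2(t^\ast)=0$, so that reflecting across $t^\ast$ yields a smooth metric on $M$. I would work in a compactified phase space, for instance using projective coordinates $(x_1,x_2,y_1,y_2)=(f_1'/f_1,f_2'/f_2,\sqrt{\text{Ric terms}})$ as in \cite{chi2024positive}, in which the ODE becomes an autonomous polynomial system with the relevant singular boundary as a critical point and the equator condition as a coordinate hyperplane $\{x_1=x_2=0\}$. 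Following the shooting paradigm, I would show that for small $c$ the trajectory crosses $\{x_1=0\}$ before $\{x_2=0\}$, while for large $c$ the reverse happens; then an intermediate value argument, combined with continuous dependence on $c$, produces some $c^\ast$ whose trajectory hits the equator $\{x_1=x_2=0\}$ simultaneously, yielding the desired Einstein metric.

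The hard part, and the place where the hypothesis $A<\chi_{d_1,d_2}$ enters, is the construction of an invariant trapping region in the phase space in which the shooting trajectories stay confined long enough for the IVT argument to apply, and in which the two crossing events can actually be ordered. The term $A\,f_2^2/f_1^4$ in the Ricci tensor is the only place where $A$ appears, and it acts as an obstruction to the monotonicity of a Lyapunov-type function that one must set up on the region (typically a quadratic polynomial in $(x_1,x_2)$ with coefficients determined by $d_1,d_2$). The explicit ceiling $\frac{d_2(d_2-1)^2}{d_1^2(d_1d_2-d_2+4)}$ for $\chi_{d_1,d_2}$ is precisely the largest value of $A$ for which a natural such region remains forward-invariant; beyond it, the inequalities defining the trapping region become inconsistent and the shooting trajectories can escape before reaching an equator. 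Once the trapping region is secured, the remainder of the proof is a standard continuity/IVT argument on the shooting parameter $c$, combined with smoothness verification at the second singular orbit using the $\mathbb{Z}_2$ symmetry of the resulting solution.
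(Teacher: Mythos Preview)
Your high-level plan---reduce to a two-function ODE, compactify to a polynomial phase space, and run a shooting/IVT argument exploiting the $\mathbb{Z}_2$-symmetry---matches the paper's. But several points are either wrong or so underspecified that the argument does not go through.

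First, the O'Neill contributions are inverted. With fiber $\mathsf{H}/\mathsf{K}$ (dimension $d_1$, component $f_1$) and base $\mathsf{G}/\mathsf{H}$ (dimension $d_2$, component $f_2$), the $A$-term in the \emph{vertical} Ricci component is $+A f_1^2/f_2^4$ and in the \emph{horizontal} component is $-\tfrac{2d_1}{d_2} A f_1^2/f_2^4$; your $A f_2^2/f_1^4$ and $-A/f_1^2$ have the roles of $f_1,f_2$ swapped. This matters: with the correct form, the degenerate curve $\gamma_0$ lives in the invariant set $\{Z=0\}$ and visibly satisfies $X_1-X_2>0$, which furnishes one end of the IVT argument.

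Second, the two IVT endpoints are not ``small $c$ versus large $c$.'' One end is the degenerate $\gamma_0$ just mentioned, which has $\sharp C(\gamma_0)=0$; the other end is a \emph{single, specific} parameter $s_\bullet$ (determined by the choice of two auxiliary constants $\sigma,\tau$) whose integral curve is shown to be trapped in a set $\mathfrak{S}_A(\sigma,\tau)$ and forced to exit through $\{X_1-X_2=0\}$, giving $\sharp C(\gamma_{s_\bullet})\ge 1$.

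Third, and this is the real gap, the trapping region is the entire substance of the proof and you have only gestured at it. The paper's $\mathfrak{S}_A(\sigma,\tau)$ is cut out by three barrier functions $S,T,P$ (none of them a quadratic Lyapunov function in the shape variables), together with an auxiliary cubic $Q$ that controls the sign of $P'$ on $\{P=0\}$. Showing $Q>0$ there reduces to comparing, for each $k\in(0,1)$, the roots in $l$ of two concave quadratics $\omega(A,k,l)$ and $\zeta(A,k,l)$; this produces two separate thresholds $A_1$ and $A_2$, and the paper sets $\chi_{d_1,d_2}=\min\{A_1,A_2\}$. The quoted ceiling $\tfrac{d_2(d_2-1)^2}{d_1^2(d_1d_2-d_2+4)}$ is the value of $A_1$ at $k=0$ (and equals $A_1$ when $d_1\ge 4$), not ``precisely the largest $A$ for which a natural region remains forward-invariant.'' Without constructing these barriers and carrying out the root comparisons, neither endpoint of the shooting argument is established, so the IVT step cannot be invoked.
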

The constant $\chi_{d_1,d_2}$ is determined by several algebraic functions of $(d_1,d_2)$. The explicit formula of $\chi_{d_1,d_2}$ for a general $(d_1, d_2)$ is fairly complicated. We present some examples in Appendix~\ref{appendix:PEexamples}. In an upcoming paper \cite{chi2025estimate}, we derive some estimates of $\chi_{d_1,d_2}$ to provide more examples of new compact Einstein manifolds.

\begin{remark}
The case $d_1=1$ was already extensively studied in the work of Bérard-Bergery  \cite{berard-bergery_sur_1982} and subsequent works \cite{koiso1986non, page1987inhomogeneous, koiso_non-homogeneous_1988, wang_einstein_1998}, where no homogeneity assumption on the hypersurface was required. Hence, our theorem is stated under the assumption $d_1 \geq 2$. In principle, our result could be generalized to sphere bundles over an inhomogeneous Einstein manifold. However, in such a setting, the connection on the sphere bundle (as the hypersurface) needs to be Yang--Mills, with the pointwise norm of its curvature being constant; see \cite[Theorem 9.73]{besse2007einstein}. How frequently such sphere bundles arise outside the homogeneous fibration context is, to the best of the author's knowledge, unclear.
\end{remark}

\begin{remark}
By Myers' Theorem \cite{myers_riemannian_1941}, a compact manifold admitting a positive Einstein metric must have a finite fundamental group. If $d_2=1$, the base space is necessarily $\mathbb{S}^1$, which implies that the fundamental group $\pi_1(M)$ is not finite. Hence, such a double disk bundle $M$ cannot admit any metric with positive Ricci curvature, and we therefore exclude the case $d_2=1$.
\end{remark}

\begin{remark}
Theorem \ref{thm_main 1} also applies when $A=0$, in which case one simply recovers the product metric on $\mathbb{S}^{d_1+1}\times \mathcal{Q}^{d_2}$. Therefore, Einstein metrics obtained in Theorem \ref{thm_main 1} with $A>0$ may be regarded as analogues of the product Einstein metric on $\mathbb{S}^{d_1+1}\times \mathcal{Q}^{d_2}$. It is known that for $5\leq n\leq 8$, there exist infinitely many cohomogeneity one Einstein metrics on $\mathbb{S}^{d_1+1}\times \mathcal{Q}^{d_2}$ \cite{bohm_inhomogeneous_1998}. Whether additional non-standard Einstein metrics exist on higher dimensional product spaces is beyond the scope of this paper.
\end{remark}

B\"ohm's work \cite{bohm_inhomogeneous_1998} established the existence of infinitely many cohomogeneity one Einstein metrics by exploiting a local spiraling mechanism: when certain critical points on the Ricci–flat subsystem are foci, integral curves can wind arbitrarily many times around the cone solution. An intermediate value argument combined with symmetry then produces infinitely many heteroclines that represent complete Einstein metrics on $\mathbb{S}^{d_1+1}\times \mathcal{Q}^{d_2}$. 

In contrast, for the broader class of examples considered here (those with $A>0$), no examples are known where the relevant critical points remain foci, so the local spiraling argument is unavailable. Hence our approach is necessarily different: by constructing global barriers and capturing a distinguished integral curve, together with an intermediate value argument and symmetry, we prove the existence of at least one heterocline that represents a complete Einstein metric on $M$.

This paper is organized as follows. In Section \ref{sec_Cohomogeneity one Einstein equations}, we introduce the dynamical system for cohomogeneity one Einstein metrics of two summands type. In Section \ref{sec_Local Analysis}, we perform local analysis on critical points representing the initial and terminal conditions. In Section \ref{sec_Global Analysis}, we generalize the global analysis in \cite{chi2024positive} and prove the main theorem. For the sake of readability, formulas and computations obtained by Maple are presented in Appendix~\ref{appendix:computation}.

\section{The cohomogeneity one Einstein equation}
\label{sec_Cohomogeneity one Einstein equations}
For a group triple $(\mathsf{K},\mathsf{H},\mathsf{G})$ as in \eqref{eqn_homogeneous fibration}, let $\mathsf{G}$ be compact and semisimple. Let $\tilde{b}$ be a normal homogeneous metric on $\mathsf{G}/\mathsf{K}$, that is, a metric induced from an $\mathrm{Ad}(\mathsf{G})$-invariant inner product on the Lie algebra $\mathfrak{g}$. If $\mathsf{G}$ is simple, we take $\tilde{b}$ to be the standard homogeneous metric induced by the negative Killing form $-B_\mathfrak{g}$. Assume that there exists a $\tilde{b}$-orthogonal decomposition $\mathfrak{g}=\mathfrak{k}\oplus \mathfrak{h}/\mathfrak{k}\oplus \mathfrak{g}/\mathfrak{h}$, where the latter two summands are $K$-invariant (not necessarily irreducible). Then there exists a 2-parameter family of homogeneous
metrics on $\mathsf{G}/\mathsf{K}$ with the parameters controlling $\mathbb{S}^{d_1}$ and $\mathcal{Q}^{d_2}$.
We further assume that the Ricci endomorphism of $\mathsf{G}/\mathsf{K}$ is diagonal for such a 2-parameter family. We fix a background metric $b$ by rescaling $\tilde{b}$ so that the homogeneous metric
\begin{equation}
\label{eqn_diagonal homogeneous metric}
f_1^2\left.b\right|_{\mathfrak{h}/\mathfrak{k}}+f_2^2\left.b\right|_{\mathfrak{g}/\mathfrak{h}}
\end{equation}
has Ricci endomorphism $\mathrm{diag}(r_1 I_{d_1}, r_2 I_{d_2})$, where
\begin{equation}
\label{eqn_Ricci tensor on G/K}
\begin{split}
%d_1&=\dim(\mathfrak{h}/\mathfrak{k}),\quad d_2=\dim(\mathfrak{g}/\mathfrak{h}),\\
r_1&=\frac{d_1-1}{f_1^2}+A\frac{f_1^2}{f_2^4},\quad r_2=\frac{d_2-1}{f_2^2}-2\frac{d_1}{d_2} A\frac{f_1^2}{f_2^4}.
\end{split}
\end{equation}

%*****
%
%Let $t$ denote the arc-length parameter along a geodesic that meets all principal orbits $\mathsf{G}/\mathsf{K}$ transversely. In this coordinate, the cohomogeneity one Einstein equation \eqref{eqn: Einstein equation} is expressed as a system of second-order ODEs in $t$. Following the approach of B\"ohm \cite{bohm_inhomogeneous_1998}, it is convenient to introduce a new variable
%\[
%d\eta = \sqrt{\trace^2(L)+n\Lambda}\, dt,
%\]
%which effectively rescales the system and allows for a more transparent analysis of the flow. In terms of $\eta$, the initial and terminal conditions at the singular orbits correspond to $\eta \to -\infty$ and $\eta \to +\infty$, respectively. Primes $'$ will denote differentiation with respect to $\eta$, while dots $\dot{}$ previously indicated differentiation with respect to $t$. 
%
%******

With a principal orbit $\mathsf{G}/\mathsf{K}$ as above, each $f_i$ is a function defined on the $1$-dimensional orbit space parametrized by $t$. From \cite{eschenburg_initial_2000}, the shape operator of $\mathsf{G}/\mathsf{K}$ in $M$ is $L:=\mathrm{diag}\left(\frac{\dot{f}_1}{f_1}I_{d_1},\frac{\dot{f}_2}{f_2}I_{d_2}\right)$. The cohomogeneity one Einstein equation is
\begin{align}
\label{eqn: Einstein equation}
\frac{\ddot{f_i}}{f_i}-\left(\frac{\dot{f_i}}{f_i}\right)^2&=-\trace(L)\frac{\dot{f_i}}{f_i}+r_i-\Lambda,\quad i=1,2;\\
\label{eqn: conservation law in Einstein equation}
d_1\frac{\ddot{f_1}}{f_1}+d_2\frac{\ddot{f_2}}{f_2}&=-\Lambda.
\end{align}
Alternatively, \eqref{eqn: conservation law in Einstein equation} can be replaced by
\begin{equation}
\label{eqn_original conservation law}
\trace(L^2)-\trace^2(L)+d_1r_1+d_2r_2-(n-1)\Lambda=0.
\end{equation}
A solution $(f_1,f_2)$ defines a cohomogeneity one Einstein metric
\begin{equation}
\label{eqn_two summand ansatz}
dt^2+f_1^2(t)\left.b\right|_{\mathfrak{h}/\mathfrak{k}}+f_2^2(t)\left.b\right|_{\mathfrak{g}/\mathfrak{h}}
\end{equation}
on $M$ if $\mathsf{G}/\mathsf{K}$ collapses to $\mathsf{G}/\mathsf{H}$ on both ends. By \cite{eschenburg_initial_2000} and more recently \cite{verdiani2022smoothness}, the smooth extension to the singular orbit requires the following initial and terminal conditions.
\begin{equation}
\label{eqn_initial condition}
\lim\limits_{t\to 0}(f_1,\dot{f_1},f_2,\dot{f_2})=(0,1,\underline{f},0),\quad \lim\limits_{t\to t_*}(f_1,\dot{f_1},f_2,\dot{f_2})=(0,-1,\overline{f},0),\quad \underline{f},\overline{f}>0.
\end{equation}

For a complete solution defined on $[0,t_*]$, many geometric quantities, including the mean curvature $\trace{(L)}$, diverge at both ends of the interval. To compactify the phase space of the dynamical system, we introduce a natural parameter $\eta$ by 
$$d\eta=\sqrt{\trace^2(L)+n\Lambda}dt.$$
Because $\trace(L)$ blows up as $t\to 0$ and $t\to t_*$, the interval $[0,t_*]$ is mapped to the entire real line in the $\eta$–coordinate. Under this change of variable, equations \eqref{eqn: Einstein equation}–\eqref{eqn: conservation law in Einstein equation} are transformed into a dynamical system on a compact phase space, which is well suited for the subsequent global analysis. Specifically, define 
$$
X_1:=\frac{\frac{\dot{f_1}}{f_1}}{\sqrt{\trace^2(L)+n\Lambda}},\quad X_2:=\frac{\frac{\dot{f_2}}{f_2}}{\sqrt{\trace^2(L)+n\Lambda}},$$
$$ Y:=\frac{\frac{1}{f_1}}{\sqrt{\trace^2(L)+n\Lambda}},\quad Z:=\frac{\frac{f_1}{f_2^2}}{\sqrt{\trace^2(L)+n\Lambda}}.
$$
Each $X_i$ is the normalized principal curvature, while the metric components are encoded in $Y$ and $Z$.
Define functions 
\begin{equation}
\label{eqn_curvature for G/K}
\begin{split}
&G:=d_1X_1^2+d_2X_2^2,\quad H:=d_1X_1+d_2X_2,\\
&R_1:=(d_1-1)Y^2+AZ^2,\quad R_2:=(d_2-1)YZ-\frac{2d_1}{d_2}AZ^2.
\end{split}
\end{equation}
Up to the normalization factor, the function $G$ represents the squared norm of the shape operator, while $H$ corresponds to the mean curvature. The homogeneous Ricci curvatures of the principal orbit along the fiber and base directions are respectively captured by $R_1$ and $R_2$. 

Here and in what follows, the prime notation ($'$) denotes differentiation with respect to $\eta$. Equations \eqref{eqn: Einstein equation} are transformed to 
\begin{equation}
\label{eqn_New Einstein equation}
\begin{split}
X_i'&=X_iH\left(G+\frac{1-H^2}{n}-1\right)+R_i-\frac{1-H^2}{n},\quad i=1,2,\\
Y'&=Y\left(H\left(G+\frac{1-H^2}{n}\right)-X_1\right),\\
Z'&=Z\left(H\left(G+\frac{1-H^2}{n}\right)+X_1-2X_2\right).\\
\end{split}
\end{equation}
The conservation law \eqref{eqn_original conservation law} is transformed to the following algebraic surface.
\begin{equation}
\label{eqn_conservation equation for 0}
\mathcal{C}:=\left\{\frac{d_1d_2}{n}(X_1-X_2)^2+d_1R_1+d_2R_2=1-\frac{1}{n}\right\}.
\end{equation}
Equivalently, we have 
\begin{equation}
\label{eqn_conservation equation for 1}
\mathcal{C}=\left\{\frac{1}{n-1}\left(G-H^2+d_1R_1+d_2R_2\right)=\frac{1-H^2}{n}\right\}.
\end{equation}

By a direct computation, one verifies that the algebraic surface $\mathcal{C}$ is invariant under the flow of \eqref{eqn_New Einstein equation}, without appealing to \eqref{eqn_original conservation law}. Recall that $Y$ and $Z$ encode metric components, so along the solutions of interest they are naturally non-negative. In the new coordinates, the range of the mean curvature is compressed from the entire real line to the open interval $(-1,1)$. These geometric constraints are reflected in the structure of the dynamical system, as stated below.
\begin{proposition}
The set
$$\mathcal{E}:=\mathcal{C}\cap \{Y,Z\geq 0\}\cap \{H^2\leq 1\}$$
is invariant under the flow of \eqref{eqn_New Einstein equation}.
\end{proposition}
\begin{proof}
Equation \eqref{eqn_New Einstein equation} shows that the sign of $Y$ and $Z$ is preserved, so we may restrict to $Y,Z\geq 0$. Moreover, differentiating $H$ gives
\begin{equation}
\label{eqn_H'}
H'=(H^2-1)\left(G+\frac{1}{n}(1-H^2)\right)=(H^2-1)\left(\tfrac{1}{n}+\tfrac{d_1d_2}{n}(X_1-X_2)^2\right).
\end{equation}
Hence the hypersurfaces $\{H=\pm 1\}$ are flow-invariant. In particular, if $H^2<1$ at some point along an integral curve, the integral curve cannot cross $\{H^2=1\}$, showing that $\mathcal{E}$ is flow-invariant.
\end{proof}

\begin{remark}
\label{rem_Z2}
The region $\{H^2<1\}$ corresponds to positive Einstein metrics ($\Lambda>0$). The system admits a $\mathbb{Z}_2$-symmetry
$$
(\eta,X_1,X_2,Y,Z) \longleftrightarrow (-\eta,-X_1,-X_2,Y,Z),
$$
which corresponds geometrically to reversing the normal direction of the hypersurface. 
This symmetry allows one to reflect an integral curve in $\mathcal{E}\cap \{H>0\}$ that intersects $\{X_1=X_2=0\}$ across $\{H=0\}$ and glue it to its mirror image, thereby producing a complete solution defined on both ends. For such a metric, the initial and terminal conditions of $f_2$ are identical, and we have $\underline{f}=\overline{f}$ in \eqref{eqn_initial condition}.
\end{remark}

\begin{remark}
In the Ricci-flat case ($\Lambda=0$), one may instead consider $d\eta=\trace(L)\,dt$ and redefine the variables $X_1,X_2,Y,Z$ accordingly.  
The resulting dynamical system coincides with the restriction of \eqref{eqn_New Einstein equation} to $\mathcal{RF}^+:=\mathcal{E}\cap \{H=1\}$. By the $\mathbb{Z}_2$-symmetry mentioned above, the subsystem restricted to $\mathcal{RF}^-:=\mathcal{E}\cap \{H=-1\}$ also corresponds to the Ricci-flat case after a time reversal.
\end{remark}

To retrieve the original system, consider 
\begin{equation}
\label{retrieve original coordinate}
t=\int_{-\infty}^\eta \sqrt{\frac{1-H^2}{n\Lambda}}d\tilde{\eta},\quad f_1^2=\frac{1-H^2}{n\Lambda Y^2},\quad f_2^2=\frac{1-H^2}{n\Lambda YZ}.
\end{equation}

\section{Local Analysis}
\label{sec_Local Analysis}
We perform local analysis at some important critical points of \eqref{eqn_New Einstein equation}. The initial and terminal conditions \eqref{eqn_initial condition} are respectively transformed to critical points  
\begin{equation}
\label{eqn_p0}
p_0^\pm=\left(\pm\frac{1}{d_1},0,\frac{1}{d_1},0\right).
\end{equation}
The local existence of cohomogeneity one Einstein metrics is obtained from the local analysis at $p_0^+$. The linearization at $p_0^+$ is 
$$
\begin{bmatrix}
\frac{4d_2-2d_1(d_2-1)}{d_1n} & \frac{d_2(d_1 - 1)(d_1 - d_2)}{d_1^2n}& \frac{2(d_1-1)}{d_1}&0\\
\frac{2d_1}{n}&\frac{1}{d_1}+\frac{d_2-d_1}{n}&0&\frac{d_2-1}{d_1}\\
\frac{2d_2}{d_1n}&\frac{d_2(d_2-d_1)}{d_1^2n}&0&0\\
0&0&0&\frac{2}{d_1}
\end{bmatrix}.
$$
The critical point is hyperbolic. The following are the only two unstable eigenvectors that are tangent to $\mathcal{E}$. Both eigenvectors have the same eigenvalue $\frac{2}{d_1}$,
$$v_1=\begin{bmatrix}
-2d_2(d_2-1)\\
2d_1(d_2-1)\\
-d_2(d_2-1)\\
2d_1(d_1+1)
\end{bmatrix},\quad 
v_2=\begin{bmatrix}
-d_1^2+d_1d_2-n\\
-2d_1^2\\
-d_2\\
0
\end{bmatrix}.
$$

By the Hartman--Grobman theorem, there is a 1-to-1 correspondence between each choice of linearized solution and an actual solution curve that emanates from $p_0^+$. From the unstable version of \cite[Theorem 4.5, Chapter 13]{coddington_theory_1955}, the error of such a correspondence is $O\left(e^{\left(\frac{2}{d_1}+\delta\right)\eta}\right)$. There exists a continuous $1$-parameter family $\gamma_s$ that emanates from $p_0^+$ such that 
\begin{equation}
\label{eqn_linearized solution near p0+}
\gamma_s\sim p_0^+ + (sv_1+v_2)e^{\frac{2}{d_1}\eta}+O\left(e^{\left(\frac{2}{d_1}+\delta\right)\eta}\right).
\end{equation}
As $v_1$ and $v_2$ are tangent to $\mathcal{E}$, each $\gamma_s$ is in $\mathcal{E}$.

By \eqref{retrieve original coordinate} and the linearized solution, we have 
$$\underline{f}^2=\lim\limits_{t\to 0}f_2^2=\lim\limits_{\eta\to-\infty}\frac{1-H^2}{n\Lambda YZ}=\frac{d_1}{\Lambda s}$$
along the integral curve $\gamma_s$.
Eigenvectors $v_1$ and $v_2$ separately correspond to two extreme situations. As $v_1$ is tangent to $\mathcal{RF}^+$, the integral curve that emanates from $p_0^+$ along $v_1$ represents a locally defined Ricci-flat metric around $\mathcal{Q}^{d_2}$. The vector $v_2$ tangent to $\{Z=0\}$ and it leads to an integral curve lying in $\{Z=0\}$, where the metric component along $\mathcal{Q}^{d_2}$ diverges. We restrict to directions of the form $sv_1+v_2$ rather than the more general combination $s_1 v_1+s_2 v_2$ since scalar multiplication does not yield genuinely new metrics: it simply shifts the integral curve along the $\eta$-axis. In the original coordinate, the operation corresponds to a homothetic change of Ricci-flat metrics, or translation in the $t$-axis of positive Einstein metrics.

By \eqref{eqn_linearized solution near p0+}, we have 
\begin{equation}
\label{eqn_initial Z and H}
Z(\gamma_s)\sim 2d_1(d_1+1) e^{\frac{2}{d_1}\eta}+O\left(e^{\left(\frac{2}{d_1}+\delta\right)\eta}\right),\quad  H(\gamma_s)\sim 1-d_1n(d_1+1) e^{\frac{2}{d_1}\eta}+O\left(e^{\left(\frac{2}{d_1}+\delta\right)\eta}\right)
\end{equation}
near $p_0^+$. Therefore, the essential 1-parameter family of non-degenerate locally defined positive Einstein metrics around $\mathcal{Q}^{d_2}$ is represented by $\{\gamma_s\mid s>0\}$. Since the $Y$-coordinate of $p_0^+$ is positive, each $\gamma_s$ with $s>0$ is in the interior 
$$\mathcal{E}^\circ:=\mathcal{E}\cap \{Y,Z>0\}\cap \{H^2<1\}.$$
With \eqref{eqn_linearized solution near p0+}, it is also natural to let $\gamma_\infty$ denote the integral curve that emanates from $p_0^+$ along $v_1$ and stays in $\mathcal{RF}^+$.

To obtain a complete Einstein metric on $M$, one needs to show the existence of an integral curve that joins $p_0^\pm$. In B\"ohm’s analysis for $A=0$, such heteroclines arise because integral curves emanating from $p_0^+$ spiral around the sine cone based over a homogeneous Einstein metric on $\mathsf{G}/\mathsf{K}$. In our setting with $A>0$, a similar mechanism requires the existence of homogeneous Einstein metrics on $\mathsf{G}/\mathsf{K}$. Algebraically, this corresponds to critical points that are characterized by the condition $X_1=X_2=\frac{1}{n}$ and $R_1=R_2$, which reduces to the following quadratic equation,
\begin{equation}
\label{eqn_homogeneous metric polynomial}
\frac{1}{Y^2}(R_1-R_2)=\frac{n+d_1}{d_2}A l^2-(d_2-1)l+(d_1-1)=0,\quad l=\frac{Z}{Y}.
\end{equation}

Let $\Delta$ be the discriminant of \eqref{eqn_homogeneous metric polynomial}. If $\Delta> 0$, or equivalently,
\begin{equation}
\label{eqn_bohm's upper bound for A}
A< \frac{1}{n+d_1}\frac{d_2(d_2-1)^2}{4(d_1-1)},
\end{equation}
there are two real roots $\mu_1>\mu_2$ of the quadratic equation. These roots correspond to metric ratios $\frac{f_1^2}{f_2^2}$ that yield two distinct homogeneous Einstein metrics on $\mathsf{G}/\mathsf{K}$, represented by the following critical points in $\mathcal{RF}^\pm$,
$$
q_i^\pm=\left(\pm\frac{1}{n},\pm\frac{1}{n},y_i,z_i\right),\quad \frac{z_i}{y_i}=\mu_i,\quad R_1(y_i,z_i)=R_2(y_i,z_i)=\frac{n-1}{n^2},\quad i=1,2.
$$
Being treated as trivial integral curves on $\mathcal{RF}^+$, the points $q_i^+$ also represent the Ricci-flat cones over the two homogeneous Einstein metrics, while $q_i^-$ are their time reversal counterparts related by the $\mathbb{Z}_2$-symmetry. 

As $\mu_1$ and $\mu_2$ are roots for \eqref{eqn_homogeneous metric polynomial}, we immediately obtain the following identities:
\begin{equation}
\label{eqn_R1-R2}
R_1-R_2=\frac{n+d_1}{d_2}A(Z-\mu_1 Y)(Z-\mu_2 Y),
\end{equation}
where
\begin{equation}
\label{eqn_basic inequalities for mu_i}
\mu_1+\mu_2=\frac{d_2(d_2-1)}{(n+d_1)A},\quad \mu_1\mu_2=\frac{d_2(d_1-1)}{(n+d_1)A}.
\end{equation}

The two sine cone solutions of the cohomogeneity one Einstein equation that joins $q_i^\pm$ are identified as the invariant sets
$$
\Phi_i:=\mathcal{E}\cap \{X_1-X_2=0\}\cap \{\mu_iY-Z=0\},\quad i=1,2.
$$
A local analysis shows that the cone solution $\Phi_2$ is a focal attractor if
\begin{equation}
\label{eqn_bohm's inequality of existence}
n\leq 8,\quad A<\frac{(9-n)(d_2 n+ 7n + 9d_1)}{(d_1 n-8n-9 d_1)^2}\frac{d_2(d_2 - 1)^2}{4(d_1-1)},
\end{equation}
so there exist integral curves emanating from $p_0^+$ that spiral around the cone with arbitrarily large winding number, which by the $\mathbb{Z}_2$-symmetry leads to infinitely many heteroclines that join $p_0^\pm$. Hence there exist infinitely many cohomogeneity one Einstein metrics on $M$; see \cite[Corollary 5.8]{bohm_inhomogeneous_1998}. Unfortunately, there is no known example of a $\mathsf{G}/\mathsf{K}$ that satisfies \eqref{eqn_bohm's inequality of existence} with $A>0$. We are hence motivated to find a new upper bound for $A$, presumably in the interval $\left(\frac{(9-n)(d_2 n+ 7n + 9d_1)}{(d_1 n-8n-9 d_1)^2}\frac{d_2(d_2 - 1)^2}{4(d_1-1)},\frac{1}{n+d_1}\frac{d_2(d_2-1)^2}{4(d_1-1)}\right)$, for the existence theorem. 

\begin{remark}
\label{rem_non_existence}
If $A\geq \frac{1}{n+d_1}\frac{d_2(d_2-1)^2}{4(d_1-1)}$, there exists at most one $\mathsf{G}$-invariant Einstein metric on $\mathsf{G}/\mathsf{K}$. It is further deduced that the two summands type ansatz does not yield any cohomogeneity one Einstein metric on $M$
if such an inequality holds; see \cite[Theorem 3.1]{bohm_inhomogeneous_1998}. In an upcoming paper \cite{chi2025nonexistence}, we show that for some principal orbit, its structural triple satisfies \eqref{eqn_bohm's upper bound for A} yet does not yield any Einstein metric on $M$ from the two summands type ansatz.
\end{remark}

\section{Global Analysis}
\label{sec_Global Analysis}
As explained in the Introduction, B\"ohm's local spiraling mechanism (which yields infinitely many solutions when $q_2^\pm$ are foci) is generally not available if $A>0$. In this section, we implement a global analysis outlined below: construct suitable barrier functions, single out a distinguished integral curve $\gamma_{s_\bullet}$, and use an intermediate value argument to produce a heterocline joining $p_0^\pm$. We begin by introducing the compact region $\mathcal{E}^+$ and establishing its basic properties. We then define the barrier functions $S,T,P$ and analyze their evolution along $\gamma_s$.

Consider the following set whose boundary contains $p_0^+$ and $\Phi_i$.
$$\mathcal{E}^+:=\mathcal{E}\cap \{H\geq 0\}\cap \{X_1-X_2\geq  0\}\cap \{\mu_1 Y-Z\geq 0\}.$$
\begin{proposition}
\label{prop_S_is compact}
The set $\mathcal{E}^+$ is compact.
\end{proposition}
\begin{proof}
By $\mu_1Y-Z\geq 0$, we have 
\begin{equation}
\label{eqn_scalr curvatrue}
\begin{split}
d_1R_1+d_2R_2&=d_1(d_1-1)Y^2+d_2(d_2-1)YZ-d_1AZ^2\\
&\geq d_1(d_1-1)Y^2+\frac{d_2(d_2-1)-d_1A\mu_1}{\mu_1}Z^2.
\end{split}
\end{equation}
By \eqref{eqn_basic inequalities for mu_i}, we have $A= \frac{d_2(d_2-1)}{(n+d_1)(\mu_1+\mu_2)}\leq \frac{d_2(d_2-1)}{d_1\mu_1}$. The coefficient of $Z^2$ in \eqref{eqn_scalr curvatrue} is positive. Therefore, the functions $X_1-X_2$, $Y$ and $Z$ are bounded by \eqref{eqn_conservation equation for 0}. As $H^2\leq 1$ in $\mathcal{E}$, the functions $X_1$ and $X_2$ are bounded in a compact parallelogram region. The proof is complete.
\end{proof}

\begin{proposition}
\label{prop_each orbit transversely escapes}
Every integral curve in the interior 
$$(\mathcal{E}^+)^\circ:=\mathcal{E}^\circ \cap \{H> 0\}\cap \{X_1-X_2> 0\}\cap \{\mu_1 Y-Z> 0\}$$ leaves $\mathcal{E}^+$ transversely through one of the following boundary faces
$$\{H=0\},\quad \{X_1-X_2=0\},\quad \{\mu_1Y-Z\}.$$
\end{proposition}
\begin{proof}
By \eqref{eqn_H'}, an integral curve in $(\mathcal{E}^+)^\circ$ must escape $\mathcal{E}^+$.
Since $\mathcal{RF}^+$, $\mathcal{E}\cap\{Y=0\}$, and $\mathcal{E}\cap\{Z=0\}$ are invariant, an integral curve in $(\mathcal{E}^+)^\circ$ can only exit $\mathcal{E}^+$ through one of the boundary faces listed above.

If an integral curve exits through $\{H=0\}$, transversality follows from \eqref{eqn_H'}.  
For $\{X_1-X_2=0\}$, differentiating gives
\begin{equation}
\label{eqn_X1-X2 deri}
\begin{split}
(X_1-X_2)'&=(X_1-X_2)H\left(G+\frac{1-H^2}{n}-1\right)+R_1-R_2\\
&=(X_1-X_2)H\left(G+\frac{1-H^2}{n}-1\right)+\frac{n+d_1}{d_2}A(Z-\mu_1 Y)(Z-\mu_2 Y),
\end{split}
\end{equation}
A non-transverse exit would force the integral curve to lie entirely in the invariant sets $\mathcal{E}\cap\{Y=Z=0\}$ or in $\Phi_i$, both disjoint from $(\mathcal{E}^+)^\circ$.  

Similarly, for $\{\mu_iY-Z=0\}$ one finds
\begin{equation}
\label{eqn_muiY-Z derivative}
(\mu_i Y-Z)'=(\mu_i Y-Z)H\left(G+\frac{1-H^2}{n}\right)-\mu_i Y X_1-Z (X_1-2X_2).
\end{equation}
So non-transverse escape is possible only in $\mathcal{E}\cap\{Y=Z=0\}$, or in $\Phi_i$. Again, these sets are disjoint from $(\mathcal{E}^+)^\circ$.  

Thus any integral curve in $(\mathcal{E}^+)^\circ$ escapes $\mathcal{E}^+$ only transversely.
\end{proof}

By \eqref{eqn_initial Z and H} and the coordinate of $p_0^+$, each $\gamma_s$ with $s>0$ is initially in $(\mathcal{E}^+)^\circ$. Therefore, each $\gamma_s$ with $s>0$ escapes $\mathcal{E}^+$ transversely by Proposition \ref{prop_each orbit transversely escapes}. 

Let $\sharp C(\gamma_s)$ be the number of times that a $\gamma_s$ intersects 
$$\mathcal{B}:=\mathcal{E}^+ \cap \{X_1-X_2=0\}\cap \{0<H<1\}.$$ 
We have the following proposition.
\begin{proposition}
\label{prop_bullet escape point}
If $\sharp C(\gamma_{s})\geq 1$ for some $s> 0$, the integral curve escapes $\mathcal{E}^+$ through the 2-dimensional region
$$
\tilde{\mathcal{B}}:=\mathcal{E}\cap \{X_1-X_2=0\}\cap \{0< H< 1\}\cap \{\mu_2 Y< Z < \mu_1 Y\}.
$$
\end{proposition}
\begin{proof}
We show that the escaping point of $\gamma_s$ necessarily contributes to $\sharp C(\gamma_{s})$. By Proposition \ref{prop_each orbit transversely escapes}, we need to rule out the other two possibilities.

If a $\gamma_{s}$ escapes through $\{H=0\}$, the function $H$ becomes and remains negative after the intersection. Thus $\sharp C(\gamma_s)=0$. 

If a $\gamma_s$ escapes through $\{\mu_1 Y-Z\}$, the integral curve enters the set 
$$
\mathcal{U}:=\mathcal{E}\cap \{\mu_1 Y-Z< 0\}\cap \{X_1-X_2>0\}\cap \{Z>0\}.
$$
By \eqref{eqn_muiY-Z derivative}, we have
$$\left.(\mu_1 Y-Z)'\right|_{\mu_1Y-Z=0}= 2Z(X_2-X_1)<0.
$$
By \eqref{eqn_X1-X2 deri} and $Z-\mu_2 Y>Z-\mu_1 Y>0$, we also have 
$$\left.(X_1-X_2)'\right|_{X_1-X_2=0}= \frac{n+d_1}{d_2}A(Z-\mu_1 Y)(Z-\mu_2 Y)>0.$$
The set $\mathcal{U}$ is invariant. So it is impossible for a $\gamma_s$ in $\mathcal{U}$ to intersect $\mathcal{B}$ afterward. 

The only possibility is $\gamma_{s}$ escaping $\mathcal{E}^+$ through $\{X_1-X_2=0\}$. As $X_1-X_2>0$ at $p_0^+$, we have 
$$\left.(X_1-X_2)'\right|_{X_1-X_2=0}=\frac{n+d_1}{d_2}A (Z-\mu_1 Y)(Z-\mu_2 Y)\leq 0$$ at the intersection point. The derivative cannot vanish at the intersection point, since $\gamma_{s}$ does not intersect $\Phi_i$. Therefore, we conclude that such a  $\gamma_{s}$ must intersect $\tilde{\mathcal{B}}$.
\end{proof}

We have the following lemma, which is an analogous statement to \cite[Theorem 4.6]{bohm_inhomogeneous_1998}.
\begin{lemma}
\label{lem_IVT}
If there is an $s_\bullet> 0$ such that $\sharp C(\gamma_{s_\bullet})\geq 1$, there exists some $s_\star\in (0,s_\bullet)$ such that $\gamma_{s_\star}$ is a heterocline that joins $p_0^\pm$. 
\end{lemma}
\begin{proof}
Since $\gamma_0$ is in the invariant set $\mathcal{E}\cap \{Z=0\}$, we have 
$$
(X_1-X_2)'=(X_1-X_2)H\left(G+\frac{1-H^2}{n}-1\right)+(d_1-1)Y^2
$$
on $\gamma_0$.
Hence, the function $X_1-X_2$, which is initially positive, can never vanish along $\gamma_0$. We have $C(\gamma_{0})=0$.

Let $\eta_s\in \mathbb{R}$ be the smallest number such that $\gamma_s(\eta_s)\in \partial \mathcal{E}^+$. The above argument shows that $\gamma_0(\eta_0)\in \mathcal{E}^+\cap \{H=0\}\cap \{X_1-X_2>0\}$. By Proposition~\ref{prop_bullet escape point}, we have $\gamma_{s_\bullet}(\eta_{s_\bullet})\in \tilde{\mathcal{B}}\subset\{X_1-X_2=0\}$. Since each $\gamma_s$ with $s>0$ escapes $\mathcal{E^+}$ transversely by Proposition \ref{prop_each orbit transversely escapes}, we have the following connected path
$$
\mathcal{P}:=\partial \mathcal{E}^+ \cap \{\gamma_s(\eta_s)\mid s\in[0,s_\bullet]\}
$$
that joins a point in $\mathcal{E}^+\cap \{H=0\}\cap \{X_1-X_2>0\}$ (from $\gamma_0$) and a point in $\tilde{\mathcal{B}}\subset\{X_1-X_2=0\}$ (from $\gamma_{s_\bullet}$). Since $\gamma_0(\eta_0)$ is not in the closure of $\tilde{\mathcal{B}}$. The path $\mathcal{P}$ intersects $\partial \tilde{\mathcal{B}}$. Therefore, there exists a $s_\star \in (0,s_\bullet)$ for which $\gamma_{s_\star}(\eta_{s_\star})\in \partial\tilde{\mathcal{B}}$. Since $\mathcal{RF}^+$ and each $\Phi_i$ are invariant, the point $\gamma_{s_\star}(\eta_{s_\star})$ must be in $\partial \tilde{B}\cap \{H=0\}\subset \{X_1=X_2=0\}$, meaning that $\gamma_{s_\star}$ escapes $\mathcal{E^+}$ through $\{X_1=X_2=0\}$. By the $\mathbb{Z}_2$-symmetry explained in Remark~\ref{rem_Z2}, such a $\gamma_{s_\star}$ is a heterocline that joins $p_0^\pm$. 
\end{proof} 

In \cite{chi2024positive}, we prove the existence of $s_\bullet$ by constructing a compact barrier set $\mathcal{S}$ that forces each $\gamma_s$ in the set to intersect $\mathcal{E}^+\cap\{H> 0\}\cap\{X_1-X_2=0\}$. With a different structural triple $(d_1,d_2,A)$, the set $\mathcal{S}$ may or may not exist. The counterpart to $\mathcal{S}$ in this paper depends on varying $(d_1,d_2,A)$. As $A$ falls below or exceeds certain thresholds determined by $(d_1, d_2)$, each $\gamma_s$ in the set is forced to escape through certain faces. 

Define functions
\begin{equation}
\label{eqn_barrier functions}
\begin{split}
S&:=\frac{Z}{Y}\frac{X_1+\frac{d_2}{2d_1}X_2}{X_2},\\
T&:=\frac{1-H^2}{n}\frac{1}{YZ},\\
P&:=X_1\left(R_2-\frac{1-H^2}{n}\right)-X_2\left(R_1-\frac{1-H^2}{n}\right)-2X_2(X_1-X_2)\left(X_1+\frac{d_2}{2d_1}X_2\right).
\end{split}
\end{equation}
The derivatives of functions \eqref{eqn_barrier functions} along $\gamma_s$ are
\begin{align}
\label{eqn_S'}
S'&=-\frac{1}{X_2\left(X_1+\frac{d_2}{2d_1}X_2\right)}SP\\
\label{eqn_T'}
T'&=2 TX_2\\
\label{eqn_P'}
P'&=P\left(H\left(3G+\frac{3}{n}(1-H^2)-1\right)+\frac{n}{d_1}X_2-X_1\right)+(X_1-X_2)(Q+P),
\end{align}
where 
\begin{equation}
\label{eqn_function Q}
\begin{split}
Q&=4X_2\left(X_1+\frac{d_2}{2d_1}X_2\right)\left(H+\frac{d_2}{2d_1}X_2\right)+\left(2X_1+2X_2+\frac{3d_2}{d_1}X_2\right)\frac{1-H^2}{n}\\
&\quad -2(d_2-1)X_1YZ-X_2\left(2(d_1-1)Y^2+\frac{3d_2}{d_1}(d_2-1)YZ\right).
\end{split}
\end{equation}

We discuss our motivation of using the functions listed in \eqref{eqn_barrier functions}. In the original coordinates, we have $T=\Lambda f_2^2$. From the analytic viewpoint, both $T$ and its derivative \eqref{eqn_T'} is positive in $(\mathcal{E}^+)^\circ$, making $T$ a convenient function to control along the integral curves. The function $S$ is also positive in $(\mathcal{E}^+)^\circ$, and its monotonicity depends on the sign of $P$. From the dynamical systems viewpoint, the function $S$ monitors the rotation of $\gamma_s$ around $\Phi_2$, which occurs in the $(X_1-X_2, \mu_2Y-Z)$–plane, while $H$ decreases monotonically. The ratio $\frac{Z X_1}{YX_2}$ is a natural quantity to record this rotation, but its derivative involves a factor (the original version of $P$) that does not preserve positivity in $(\mathcal{E}^+)^\circ$. The modified definition of $S$ is precisely tuned so that the derivative formulas \eqref{eqn_S'}–\eqref{eqn_P'} are better suited for our analysis.

From a geometric viewpoint, the function $S$ can be regarded as a perturbation of certain
first integrals that arise in the Ricci–flat case. For instance, many known cohomogeneity one Ricci–flat metrics with a first integral (e.g. the Bryant--Salamon $G_2$ and $\mathrm{Spin}(7)$ metrics \cite{bryant_construction_1989}) satisfy the following first order system in the original $(t,f_1,f_2)$-coordinates:
\begin{equation}
\label{eqn_first integral}
\frac{\dot{f_1}}{f_1}=\frac{1}{f_1}- \frac{d_2}{2d_1\sigma}\frac{f_1}{f_2^2},\quad 
\frac{\dot{f_2}}{f_2}=\frac{1}{\sigma}\frac{f_1}{f_2^2},\quad 
\sigma=\frac{d_1+1}{d_2-1}.
\end{equation}
In the new $(\eta,X_1,X_2,Y,Z)$-coordinates, this first integral is realized as an algebraic curve in $\mathcal{RF}^+$ given by
$$\mathcal{L}:=\mathcal{RF}^+\cap \{X_1 = Y- \tfrac{d_2}{2d_1\sigma}Z\}\cap \left\{X_2=\frac{Z}{\sigma}\right\}\cap \left\{\sigma =\frac{d_1+1}{d_2-1}\right\}.$$
We refer to Appendix~\ref{appendix:RFexamples} for the proof of $\mathcal{L}$ being invariant for some $(d_1,d_2,A)$ and explicit examples where the integrability condition can be verified. Our definition of $S$ unifies the two algebraic relations in $\mathcal{L}$ into the single condition $S=\sigma$. Perturbing $(d_1,d_2,A,\sigma)$ away from the Ricci–flat subsystem, together with the functions $P$ and $T$, provides a useful barrier in the positive Einstein setting. 

Define
\begin{equation}
\label{eqn_set_S}
\begin{split}
\mathfrak{S}_A(\sigma,\tau)&=\mathcal{E}^+\cap \{X_1\geq X_2\geq  0\} \cap \{P\geq 0\}\cap \{S\leq \sigma\}\cap \{T\geq \tau\},
\end{split}
\end{equation}
where we fix
$$
(\sigma,\tau)=\left(\frac{d_2(d_2-1)}{2d_1^2A},d_2-1-\frac{d_1+1}{d_2(d_2-1)}2d_1^2A\right).
$$
The set \eqref{eqn_set_S} is a closed subset of the compact set $\mathcal{E}^+$, see Proposition \ref{prop_S_is compact}, and it is therefore compact. 

\begin{remark}
\label{rem_sigma_tau}
In principle, other choices of $(\sigma,\tau)$ are possible to obtain an upper bound of existence for $A$. The present choice is artificially tuned so that $\gamma_{s_\bullet}$, where
\begin{equation}
\label{eqn_sbullet}
s_\bullet=\frac{d_1}{\tau}=\frac{d_1\sigma}{(d_2-1)\sigma-(d_1+1)},
\end{equation}
is the only candidate in the 1-parameter family to enter the set $\mathfrak{S}_A(\sigma,\tau)$
that plays the role in Lemma \ref{lem_IVT}. Furthermore, we show in \cite{chi2025nonexistence} that the present choice of $(\sigma,\tau)$ maximizes the admissible upper bound for $A$ under the framework of this paper.
\end{remark}

Here and in what follows, we take $s_\bullet$ as in \eqref{eqn_sbullet}. With this preparation, we first analyze the possible possible faces of $\mathfrak{S}_A(\sigma,\tau)$, through which an integral curve inside $\mathfrak{S}_A(\sigma,\tau)$ escapes. Next, we show that for suitable values of $A$, the integral curve $\gamma_{s_\bullet}$ indeed enters $\mathfrak{S}_A(\sigma,\tau)$ once it leaves $p_0^+$. Lastly, we identify ranges of $A$ for which escape through $\mathfrak{S}_A(\sigma,\tau)\cap \{P=0\}$ is impossible, forcing $\gamma_{s_\bullet}$ to leave through $\mathfrak{S}_A(\sigma,\tau)\cap\{X_1=X_2>0\}$ and hence producing the desired heterocline.

\begin{proposition}
\label{prop_how curves in triangle escape}
If $A\in \left[0,\frac{d_2(d_2-1)^2}{2d_1^2(d_1+1)}\right)$, a non-trivial integral curve in $\mathfrak{S}_A(\sigma,\tau)$ either escapes the set through the face $\mathfrak{S}_A(\sigma,\tau)\cap \{P=0\}$, or $\mathfrak{S}_A(\sigma,\tau)\cap\{X_1=X_2>0\}$.
\end{proposition}
\begin{proof}
Suppose a non-trivial integral curve escapes through the face $\{X_2=0\}$. If $X_1=0$ at the escaping point, there is nothing to prove since the escape point lies in $\{P=0\}$. If $X_1>0$ at the escaping point, the $Z$-coordinate must vanish as the function $S$ is non-increasing in $\mathfrak{S}_A(k,l)$. With $X_2=Z=0$, the inequality $P\geq 0$ becomes $-X_1\frac{1-d_1^2X_1^2}{n}\geq 0$, meaning that the escaping point can only be $p_0^+$, a contradiction.

By \eqref{eqn_basic inequalities for mu_i}, we have
\begin{equation}
\label{eqn_sigma is small}
\sigma=\frac{d_2(d_2-1)}{2d_1^2A}=\frac{n+d_1}{2d_1^2}\frac{\mu_1+\mu_2}{2}<\frac{n+d_1}{2d_1}\mu_1.
\end{equation}
By $X_1\geq X_2$ and $n=d_1+d_2$, we also have 
$$
\frac{Z}{Y}\frac{n+d_1}{2d_1}< \frac{Z}{Y}\frac{X_1+\frac{d_2}{2d_1}X_2}{X_2}=S.
$$
Therefore, the inequality $S\leq \sigma<\frac{n+d_1}{2d_1}\mu_1$ in $\mathfrak{S}_A(\sigma,\tau)$ implies 
$$\frac{Z}{Y}\frac{n+d_1}{2d_1}<\frac{n+d_1}{2d_1}\mu_1.$$ Hence, an integral curve in $\mathfrak{S}_A(\sigma,\tau)$ does not escape through the face $\{Z-\mu_1 Y=0\}$.

Since $S$ is non-increasing in $\mathfrak{S}_A(\sigma,\tau)$, an integral curve in $\mathfrak{S}_A(\sigma,\tau)$ does not escape the set through $\mathfrak{S}_A(\sigma,\tau)\cap \{S=\sigma\}\cap \{P>0\}$. The inequality $\tau>0$ holds if $A\in \left[0,\frac{d_2(d_2-1)^2}{2d_1^2(d_1+1)}\right).$ Since $T$ is non-decreasing in $\mathfrak{S}_A(\sigma,\tau)$, an integral curve in $\mathfrak{S}_A(\sigma,\tau)$ does not escape the set through $\mathfrak{S}_A(\sigma,\tau)\cap \{T=\tau\}$.
\end{proof}

Motivated by the derivative formula \eqref{eqn_S'}-\eqref{eqn_P'} and Proposition \ref{prop_how curves in triangle escape}, we next analyze the sign of $Q$ on $\mathfrak{S}_A(\sigma,\tau)\cap \{P=0\}$. This is crucial in ruling out escaping through $\{P=0\}$ for suitable ranges of $A$, and also helps clarify whether $\gamma_{s_\bullet}$ lies in $\mathfrak{S}_A(\sigma,\tau)$ once it leaves $p_0^+$. 

The set $\mathfrak{S}_A(\sigma,\tau)$ is equivalently defined as  
\begin{equation}
\begin{split}
\mathfrak{S}_A(\sigma,\tau)&=\left(\bigcup\limits_{(k,l)\in [0,1]\times [0,\mu_1]}\mathcal{E}_{k,l}\right)\cap \{P\geq 0\}\cap \{S\leq \sigma\}\cap \{T\geq \tau\},
\end{split}
\end{equation}
where
\begin{equation}
\label{eqn_kl slice}
\begin{split}
\mathcal{E}_{k,l}:=\mathcal{E}^+\cap \{X_2-kX_1=0,X_1\geq 0\}\cap \{Z-lY=0, Y\geq 0\}.
\end{split}
\end{equation}
Parametrize the algebraic surface $\mathfrak{S}_A(\sigma,\tau)\cap \{P=0\}$ using $(k,l)$ as in \eqref{eqn_kl slice}. The function $Q$ on $\mathfrak{S}_A(\sigma,\tau)\cap \{P=0\}$ is realized as a function on $\{(k,l)\mid (k,l)\in[0,1]\times [0,\mu_1]\}$. Specifically, with \eqref{eqn_conservation equation for 1}, the functions $P$ and $Q$ are equivalently defined as homogeneous polynomials of $(X_1,X_2,Y,Z)$ of degree $3$. On each slice $\mathcal{E}_{k,l}$, we have 
\begin{equation}
\label{eqn_P and Q in X and Y}
\begin{split}
P&=P_Y Y^2X_1+P_X X_1^3,\quad Q=Q_Y Y^2X_1+Q_X X_1^3,
\end{split}
\end{equation}
where
\begin{equation}
\begin{split}
P_Y(A,k,l)&=-\frac{d_1(n+d_1-2)+d_2(n+d_1-1)k}{d_2 (n-1)} Al^2\\
&\quad  + \frac{(d_2-1)(d_1+d_2k-1)}{n-1}l-\frac{(d_1-1)(d_1+d_2 k-k)}{n-1},\\
P_X(k)&=\frac{d_2(d_2-1)(d_1-1)k^2-d_1d_2 k^2 + 2(d_2 - 1)(d_1 -1)d_1k + d_1^2(d_1-1)}{d_1(n - 1)}(1 - k),\\
Q_Y(A,k,l)&=-\frac{2 n k+d_2 k+2 d_1}{n-1}Al^2\\
&\quad -\frac{2d_1(d_1-1)+d_1d_2 k-3d_2 k}{d_1(n-1)}(d_2-1)l+\frac{(2d_1+d_2 k+2 k)(d_1-1)}{n-1},\\
Q_X(k)&=4k\left(1+\frac{d_2 k}{2d_1}\right)\left(d_1+d_2k+\frac{d_2k}{2d_1}\right)+\left(2+2k+\frac{3d_2k}{d_1}\right)\frac{d_1+d_2k^2-(d_1+d_2k)^2}{n-1}.
\end{split}
\end{equation}

\begin{proposition}
\label{prop_px is negative somewhere}
For any $d_1, d_2\geq 2$, the inequality $P_X\geq 0$ holds on $[0,1]$ and only vanishes at $k=1$.
\end{proposition}
\begin{proof}
The quadratic factor of $P_X$ satisfies the inequality
\begin{equation}
\begin{split}
&d_2(d_2-1)(d_1-1)k^2-d_1d_2 k^2 + 2(d_2 - 1)(d_1 -1)d_1k + d_1^2(d_1-1)\\
&\geq d_2(d_2-1)(d_1-1)k^2 + (2(d_2 - 1)(d_1 -1)d_1-d_1d_2)k + d_1^2(d_1-1).
\end{split} 
\end{equation}
The coefficient of each $k^i$ is non-neagative on $[0,1]$ since $d_1,d_2\geq 2$. Therefore, the polynomial is positive on $[0,1]$.
\end{proof}

Define $\omega=Q_YP_X-Q_XP_Y$. On $\mathfrak{S}_A(\sigma,\tau)\cap \{P=0\}$, we have 
\begin{equation}
\label{eqn_Q and omegaA}
\begin{split}
Q&= Q_Y Y^2X_1+Q_X \frac{P-P_YY^2X_1}{P_X}=\frac{\omega}{P_X}Y^2X_1.
\end{split}
\end{equation}
The function $\omega$ has the same sign as $Q$ on $\mathfrak{S}_A(\sigma,\tau)\cap \{P=0\}$. The specific formula for $\omega$ is 
\begin{equation}
\label{eqn_omega formula}
\begin{split}
\omega(A,k,l)&=\frac{1}{d_1^2(n-1)}\left(\frac{2d_1+d_2k}{d_2}\omega_2 Al^2+(d_2-1)k\omega_1 l -(d_1-1)k^2\omega_0\right),
\end{split}
\end{equation}
where
\begin{equation}
\begin{split}
\omega_2(k)&=(2d_1^2d_2^2 - d_1d_2^3 + d_2^3 - d_2^2)k^3 + (4d_1^3d_2 - 4d_1^2d_2^2 - 2d_1^2d_2 + 4d_1d_2^2 - 2d_1d_2)k^2\\
&\quad + (2d_1^4 - 5d_1^3d_2 - 2d_1^3 + 5d_1^2d_2)k - 2d_1^4 + 2d_1^3,\\
\omega_1(k)&=(d_1d_2^3 - 4d_1d_2^2 - d_2^3 + 3d_2^2)k^3 + (2d_1^2d_2^2 - 8d_1^2d_2 + 8d_1d_2 - 2d_2^2)k^2 \\
&\quad + (d_1^3d_2 - 4d_1^3 + 5d_1^2d_2 + 4d_1^2 - 6d_1d_2)k + 4d_1^3 - 4d_1^2,\\
\omega_0(k)&=(d_1d_2^3 - 2d_1d_2^2 - d_2^3 - 2d_1d_2 + d_2^2)k^2 \\
&\quad + (2d_1^2d_2^2 - 2d_1^2d_2 - 2d_1d_2^2 - 4d_1^2 + 4d_1d_2)k + d_1^3d_2 - d_1^2d_2 + 4d_1^2.
\end{split}
\end{equation}

The critical point $p_0^+$, whose $(k,l)$-coordinate is $(0,0)$, is on the boundary of $\mathfrak{S}_A(\sigma,\tau)\cap \{P=0\}$. If $A=0$, the product Einstein metric is represented by the algebraic curve $\mathcal{E}\cap \{X_2=0\}\cap \left\{R_2-\frac{1-H^2}{n}=0\right\}$, at which both $P$ and $Q$ identically vanish. This is reflected by $\omega(0,0,l)$ being identically zero. 

For any $A\geq 0$, both $\omega(A,k,l)$ and its gradient vanish at $(0,0)$. To determine the sign of $\omega(A,k,l)$ near $p_0^+$, we compute the Hessian of the function at $(0,0)$,
\begin{equation}
\label{eqn_Hessian}
\left.\mathrm{Hess}(\omega)\right|_{(0,0)}=
\begin{bmatrix}
-\frac{2(d_1-1)(d_1d_2-d_2+4)}{n-1}&\frac{4(d_1-1)(d_2-1)}{n-1}\\
\frac{4(d_1-1)(d_2-1)}{n-1}&-\frac{8d_1^2(d_1-1)}{d_2(n-1)}A
\end{bmatrix}.
\end{equation}
By the second derivative test, we have the following proposition.
\begin{proposition}
\label{prop_local sign of omega}
If $A\in \left[0,\frac{d_2(d_2-1)^2}{d_1^2(d_1d_2-d_2+4)}\right)$, the critical point $(0,0)$ is a saddle. If $A>\frac{d_2(d_2-1)^2}{d_1^2(d_1d_2-d_2+4)}$, the critical point $(0,0)$ is a local maximum.
\end{proposition}

Note that $\frac{d_2(d_2-1)^2}{d_1^2(d_1d_2-d_2+4)}$ is not larger than the upper bound in Proposition \ref{prop_how curves in triangle escape} provided $d_1,d_2 \geq 2$. In particular, Propositions \ref{prop_how curves in triangle escape} and \ref{prop_local sign of omega}, together with the derivative formula \eqref{eqn_P'}, show that a necessary condition for $Q\geq 0$ on $\mathfrak{S}_A(\sigma,\tau)\cap\{P=0\}$ is 
$A\in \left[0,\tfrac{d_2(d_2-1)^2}{d_1^2(d_1d_2-d_2+4)}\right).$

From now on we restrict to this range of $A$. To prove that $\gamma_{s_\bullet}$ indeed enters $\mathfrak{S}_A(\sigma,\tau)$, we examine in turn the defining inequalities of $\mathfrak{S}_A(\sigma,\tau)$. 
The bounds involving $S\leq \sigma$ and $T\geq \tau$ are immediate from the linearized solution near $p_0^+$ and the monotonicity of these functions, as shown in the following proposition.

\begin{proposition}
\label{prop_ST_initial}
Along the integral curve $\gamma_{s_\bullet}$ one has $T\geq\tau$ once the curve leaves $p_0^+$. One also has $S\leq\sigma$ initially provided that $P>0$ initially.
\end{proposition}
\begin{proof}
By \eqref{eqn_linearized solution near p0+}, we have 
$$
\lim\limits_{\eta\to-\infty} S(\gamma_{s_\bullet})=\sigma,\quad \lim\limits_{\eta\to-\infty} T(\gamma_{s_\bullet})=\tau.
$$
By definitions, we have $S,T> 0$ in $(\mathcal{E}^+)^\circ$.
Moreover, by \eqref{eqn_T'},  the function $T$ is monotonic in $(\mathcal{E}^+)^\circ$. By \eqref{eqn_S'}, the function $S$ is monotonic provided that $P>0$. Therefore, the inequalities $S\leq \sigma$ and $T\geq \tau$ hold along $\gamma_{s_\bullet}$ once the integral curve leaves $p_0^+$, given that $P>0$ initially.
\end{proof}

\begin{remark}
\label{rem_motivation of S and T}
The rational functions $S$ and $T$ are not defined at $p_0^+$ since $X_2=Z=0$ at $p_0^+$. Nevertheless, the conditions $S=\sigma$ and $T=\tau$ extend smoothly across $p_0^+$ via the relation 
$$\tilde{S}:=Z\left(X_1+\frac{d_2}{2d_1}X_2\right)-\sigma YX_2=0,\quad \tilde{T}:=\frac{1-H^2}{n}-\tau YZ=0.$$
Consider the unstable eigenvector $s_\bullet v_1+v_2$, along which $\gamma_{s_\bullet}$ emanates $p_0^+$. Since
$$\nabla\tilde{S}(p_0^+)=\begin{bmatrix}
0\\
-\frac{\sigma}{d_1}\\
0\\
\frac{1}{d_1}
\end{bmatrix},\quad \nabla \tilde{T}(p_0^+)=\begin{bmatrix}
-\frac{2d_1}{n}\\
-\frac{2d_2}{n}\\
0\\
-\frac{\tau}{d_1}
\end{bmatrix},$$
we have 
$$
\langle \nabla\tilde{S}(p_0^+),s_\bullet v_1+v_2 \rangle=\langle \nabla\tilde{T}(p_0^+),s_\bullet v_1+v_2\rangle=0.
$$
Under this smooth extension, the unstable eigenvector is tangent to the hypersurfaces $\{S=\sigma\}$ and $\{T=\tau \}$ at $p_0^+$. We work with $S$ and $T$ (instead of $\tilde{S}$ and $\tilde{T}$) because their derivatives along integral curves, \eqref{eqn_S'}–\eqref{eqn_T'}, are simpler and allow one to determine the initial sign of $S-\sigma$ and $T-\tau$ along $\gamma_{s_\bullet}$. In contrast, the linearized solution provides no information about initial signs of $\tilde{S}$ and $\tilde{T}$, so they are less effective for our purpose.
\end{remark}

It is left to show that $P>0$ initially along $\gamma_{s_\bullet}$. Since 
$$\nabla P(p_0^+)=\begin{bmatrix}
\frac{2}{n}\\
\frac{d_1(d_2-d_1)-n}{nd_1^2}\\
0\\
\frac{d_2-1}{d_1^2}
\end{bmatrix}$$
is perpendicular to both $v_1$ and $v_2$, the initial sign of $P$ along $\gamma_s$ cannot be read off from the linearized solution \eqref{eqn_linearized solution near p0+} (similar to the situation for $\tilde{S}$ and $\tilde{T}$ in Remark \ref{rem_motivation of S and T}). To resolve this, we consider the auxiliary function $\omega$, whose sign controls the initial behavior of $P$, as shown in the next proposition.

\begin{proposition}
\label{prop_P and omega have the same sign initially}
If $\omega$ is not identically zero along a $\gamma_s$ with $s> 0$, the function $\omega$ has the same sign as $P$ once the integral curve leaves $p_0^+$.
\end{proposition}
\begin{proof}
Recall that the $(k,l)$-coordinate of $p_0^+$ is $(0,0)$, and we have
$$
P_X(0)=\frac{d_1(d_1-1)}{n-1}>0.
$$
Hence for any $\gamma_s$ with $s> 0$, the integral curve initially satisfies $P_X>0$ in a small neighborhood of $p_0^+$.
We have 
\begin{equation}
\label{eqn_Q+P}
\begin{split}
Q+P
%&= Q_Y Y^2X_1+Q_X \frac{P-P_YY^2X_1}{P_X}+P
=\frac{\omega}{P_X}Y^2X_1+\frac{P_X+Q_X}{P_X}P.
\end{split}
\end{equation}
By Proposition \ref{prop_PX+QX is negative} we know that $P_X+Q_X<0$ for each $k\in [0,1]$. 

Consider first the case $\omega>0$ once $\gamma_s$ leaves $p_0^+$. If we assume $P\leq 0$ initially, then the two terms on the right-hand side of \eqref{eqn_Q+P} are both non-negative: the first term is strictly positive (since $\omega>0$, $P_X>0$ and $Y^2X_1> 0$ near $p_0^+$), while the second term is non-negative because $\frac{P_X+Q_X}{P_X}<0$ and $P\leq 0$. Thus $Q+P>0$ initially.

Next we examine the sign of $P'$ near $p_0^+$. We claim that the first term in \eqref{eqn_P'} is non-negative, since we assume $P\leq 0$ and
\begin{equation}
\label{eqn_a factor for P'}
\begin{split}
&\left(H\left(3G+\frac{3}{n}(1-H^2)-1\right)+\frac{n}{d_1}X_2-X_1\right)(\gamma_s)\\
&\sim \frac{2}{d_1}-1- ((12+6d_2)+4(d_2-1)(2d_2-1)s)e^{\frac{2}{d_1}\eta}+O\left(e^{\left(\frac{2}{d_1}+\delta\right)\eta}\right)\\
&<0
\end{split}
\end{equation}
near $p_0^+$. Together with $Q+P>0$ and $X_1-X_2=\frac{1}{d_1}>0$ initially (which makes the second term in \eqref{eqn_P'} positive), this implies that $P'$ is initially strictly positive, contradicting the assumption $P\leq 0$ initially.

The argument for the case $\omega<0$ is analogous: if $P\geq 0$ initially then by \eqref{eqn_Q+P} we obtain $Q+P<0$ initially. The inequality \eqref{eqn_a factor for P'} still holds. By the assumption that $P\geq 0$, the first term in \eqref{eqn_P'} is non-positive. Since $X_1-X_2=\frac{1}{d_1}$ and $Q+P<0$ initially, the second term in \eqref{eqn_P'} is negative. Hence, the derivative $P'$ is negative initially, a contradiction.

Therefore, whenever $\omega$ is not identically zero along $\gamma_s$, the sign of $\omega$ agrees with the sign of $P$ once the integral curve leaves $p_0^+$.
\end{proof}

\begin{proposition}
\label{prop_initial_barrier}
The integral curve $\gamma_{s_\bullet}$ is initially in $\mathfrak{S}_A(k,l)$ if $A\in \left (0,\frac{d_2(d_2-1)^2}{d_1^2(d_1d_2-d_2+4)}\right)$.
\end{proposition}
\begin{proof}
In terms of $(k,l)$, the relation $S=\sigma$ is equivalent to $$l=\sigma\frac{2d_1k}{2d_1+d_2k}.$$ By Remark \ref{rem_motivation of S and T}, the integral curve $\gamma_{s_\bullet}$ is tangent to $\{S=\sigma\}$ after smooth extension at $p_0^+$. Therefore, the $(k,l)$-projection of $\gamma_{s_\bullet}$ is tangent to $v=(1,\sigma)$ at $(0,0)$. 
Recall that $\omega$ and its gradient vanish at $(0,0)$. By \eqref{eqn_Hessian}, we have 
$$\left.\mathrm{Hess}(\omega)\right|_{(0,0)}(v,v)=\frac{2(d_1-1)}{n-1}\left(\frac{d_2(d_2-1)^2}{d_1^2}\frac{1}{A}-(d_1d_2-d_2+4)\right)>0$$
given that $A<\frac{d_2(d_2-1)^2}{d_1^2(d_1d_2-d_2+4)}$. Hence, the function $\omega$ is positive initially along $v$. By Proposition \ref{prop_P and omega have the same sign initially}, the inequality $P>0$ holds initially along $\gamma_{s_\bullet}$. Together with Proposition \ref{prop_ST_initial}, the integral curve $\gamma_{s_\bullet}$ is initially in $\mathfrak{S}_A(\sigma,\tau)$.
\end{proof}

Proposition \ref{prop_initial_barrier} provides a condition on $A$ to form a barrier for $\gamma_{s_\bullet}$ locally, which means that $\gamma_{s_\bullet}$ may still escape $\mathfrak{S}_A(\sigma,\tau)$ through $\mathfrak{S}_A(\sigma,\tau)\cap \{P=0\}$ eventually. In the following, we explore the condition on $A$ to preserve the positivity of $P$ (or equivalently the positivity of $\omega$) when $\gamma_{s_\bullet}$ is in $\mathfrak{S}_A(\sigma,\tau)$, making $\mathfrak{S}_A(\sigma,\tau)$ a global barrier so that $\gamma_{s_\bullet}$ can only escape through $\mathfrak{S}_A(\sigma,\tau)\cap \{X_1=X_2>0\}$.

Consider $\omega(A,k,l)$ as a quadratic function of $l$. We have the following proposition for the coefficients of $\omega$.
\begin{proposition}
\label{prop_omega_i have sign for positive k}
For any $d_1, d_2 \geq 2$, the inequalities $\omega_2<0$ and $\omega_0> 0$ hold for any $k\in[0,1]$.
\end{proposition}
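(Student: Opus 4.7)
The plan is to reduce each claim to an endpoint evaluation combined with an interior analysis. I would first substitute $k=-d_1/d_2$ and $k=1$ into both polynomials, clear denominators, and collect terms; the expected outputs are
\begin{align*}
\omega_2(-d_1/d_2)&=-d_1^3/d_2, & \omega_2(1)&=-d_2\,B,\\
\omega_0(-d_1/d_2)&=d_1^2(d_2+2d_1)/d_2, & \omega_0(1)&=d_2\,B,
\end{align*}
where $B=(d_1-1)d_2^2+(2d_1^2-4d_1+1)d_2+d_1(d_1-1)(d_1-2)$. For $d_1\geq 2$ every coefficient of $B$ is non-negative and $(d_1-1)d_2^2>0$, so $B>0$; hence $\omega_2$ is strictly negative and $\omega_0$ is strictly positive at both endpoints.

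Since $\omega_0$ is quadratic in $k$, the interior analysis splits on the sign of its leading coefficient $L=d_1d_2^3-2d_1d_2^2-d_2^3-2d_1d_2+d_2^2$. If $L\leq 0$ the parabola is concave down and positivity at the two endpoints propagates to the whole closed interval $[-d_1/d_2,1]$, because the chord joining two positive values lies above the $k$-axis and the parabola lies above the chord. If $L>0$ the parabola is convex with a unique minimum; in that case I would show that the vertex abscissa lies outside $[-d_1/d_2,1]$, so the minimum on the interval is attained at an endpoint already shown to be positive. Either case reduces to a polynomial inequality in $(d_1,d_2)$ that one can verify by writing $d_2=d_1+m$ with $m\geq 0$ and regrouping.

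For the cubic $\omega_2$ the situation is more delicate because its leading coefficient $d_2^2[(2d_1^2-1)-(d_1-1)d_2]$ itself changes sign with $(d_1,d_2)$, so there is no uniform concavity argument. I would examine the critical set of $\omega_2$, consisting of the at most two real roots of the (at most) quadratic $\omega_2'(k)=0$. Since both endpoint values are strictly negative, $\omega_2$ can fail to be negative on $[-d_1/d_2,1]$ only at a local maximum lying in the interior. The decisive step is therefore to compute the value of $\omega_2$ at such a local maximum in closed form, equivalently, via the discriminant/resultant of $\omega_2$ and $\omega_2'$, and to verify that this value is strictly negative for all $d_2\geq d_1\geq 2$. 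The main obstacle is that the resulting inequality in $d_1,d_2$ is a high-degree polynomial one, whose non-negativity must be exposed by careful regrouping into sums of monomials, for instance after the substitution $d_2=d_1+m$ with $m\geq 0$.
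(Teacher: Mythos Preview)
Your outline is sound and the endpoint values you compute agree with the paper's. The paper, however, organizes both verifications more efficiently than your case splits.

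For $\omega_0$, instead of splitting on the sign of the leading coefficient $L$, the paper simply evaluates
\[
\omega_0'\!\left(-\tfrac{d_1}{d_2}\right)=2d_1d_2(d_1+1)>0.
\]
This single computation shows that on $[-d_1/d_2,1]$ the quadratic is either monotonically increasing (when $L>0$) or increasing-then-decreasing (when $L\le 0$); in either case its minimum on the interval is at an endpoint, already shown positive. This bypasses the need to locate the vertex or to verify the polynomial inequality you defer.

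For $\omega_2$, the paper's key idea is to split the interval at $k=0$. On $[-d_1/d_2,0)$ it substitutes $k=-\tfrac{d_1}{d_2}\kappa$, which factors $\omega_2=-\tfrac{d_1^3}{d_2}\kappa^3\,\tilde\omega_2(\kappa)$; the claim reduces to $\tilde\omega_2>0$ on $(0,1]$, handled by locating the unique interior critical point $\kappa_*$ explicitly and checking $\tilde\omega_2(\kappa_*)>0$ (Appendix, Proposition~\ref{prop_omegea2atkappa}). On $[0,1]$ the paper then counts monotonicity changes: from $\omega_2'(-d_1/d_2)=d_1^2(2d_1+1)>0$ together with $\omega_2(0)=-2d_1^3(d_1-1)<\omega_2(-d_1/d_2)$, at least one root of the quadratic $\omega_2'$ already lies in $[-d_1/d_2,0)$, so at most one remains in $[0,1]$; since $\omega_2(0)<0$ and $\omega_2(1)<0$, negativity on $[0,1]$ follows.

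Your plan to evaluate $\omega_2$ at its local maximum over the whole interval via a resultant is correct in principle, but the resulting two-variable inequality is exactly the ``high-degree obstacle'' you acknowledge. The paper's substitution and interval split are what make that obstacle a single explicit evaluation rather than a brute-force regrouping.
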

\begin{proof}
Since 
$$\omega_2\left(-\frac{d_1}{d_2}\right)=-\frac{d_1^3}{d_2}<0,\quad \frac{d \omega_2}{dk}\left(-\frac{d_1}{d_2}\right)=d_1^2(2d_1+1)>0,$$
$$\omega_2(0)=-2d_1^3(d_1-1)<\omega_2\left(-\frac{d_1}{d_2}\right),$$ 
the function $\omega_2$ changes monotonicity at least once on $\left[-\frac{d_1}{d_2},0\right]$. Since $\omega_2$ changes monotonicity at most twice, the function decreases and then increases, or monotonically increases on $[0,1]$. From $\omega_2(1)=-d_2(n-1)(d_1n-n-d_1)< 0$ we know that $\omega_2<0$ on $[0,1]$.

For $\omega_0$, we have 
$$
\frac{d\omega_0}{dk}(0)=2d_1(d_2-2)(d_1d_2-d_2+d_1)\geq 0.
$$
The function either monotonically increases, or increases and then decreases on $[0,1]$. Since
$$\omega_0 (0)=(d_1-1)d_1^2d_2+4d_1^2>0,\quad \omega_0(1)=d_2(n-1)(d_1n-n-d_1)>0,$$
the positivity of $\omega_0$ on $[0,1]$ is established.
\end{proof}

By Proposition \ref{prop_omega_i have sign for positive k}, we learn that for each fixed $A>0$ and $k\in (0,1)$, the quadratic function $\omega(A,k,l)$ is concave down. By \eqref{eqn_Q and omegaA} and Proposition \ref{prop_px is negative somewhere}, the positivity of $Q$ on $\mathfrak{S}_A(\sigma,\tau)\cap \{P=0\}\cap\{X_1>X_2>0\}$ is implied by the following condition: for each $k\in (0,1)$, the quadratic $\omega(A,k,l)$ has two real roots $\mu_2(A,k)<\mu_1(A,k)$ and the interval for $l$ obtained from $\mathfrak{S}_A(\sigma,\tau)\cap \{P=0\}$ is contained in $(\mu_2(A,k),\mu_1(A,k))$.

In the following, we specify how $\mathfrak{S}_A(\sigma,\tau)\cap \{P=0\}$ gives rise to a closed interval for $l$. In terms of $(k,l)$, the inequality $S\leq \sigma$ is equivalent to 
\begin{equation}
l\leq \nu_1(A,k):= \sigma \frac{2d_1 k}{2d_1+d_2k}=\frac{d_2(d_2-1)}{d_1A}\frac{k}{2d_1+d_2k},
\end{equation}
and it gives the upper bound for $l$. Presumably the lower bound $\nu_2(A,k)$ for $l$ is obtained from $\{P=0\}\cap \{T\geq \tau\}$. It takes the following computations to write $T\geq \tau$ in terms of $k$ and $l$. From \eqref{eqn_conservation equation for 1}, the inequality $T\geq \tau$ is equivalent to 
$$G-H^2+d_1R_1+d_2R_2-(n-1)\tau YZ\geq 0.$$
Rewrite the above inequality as $T_X(k)X_1^2+T_Y(A,k,l)Y^2\geq 0,$ where $T_X$ and $T_Y$ are polynomials.
With $P=0$, the condition $T\geq \tau$ is equivalent to
\begin{equation}
\begin{split}
T_XX_1^2+T_YY^2=\frac{T_Y P_X-T_X P_Y}{P_X}Y^2\geq 0.
\end{split}
\end{equation}
Define $\zeta(A,k,l)=T_Y P_X-T_X P_Y$. The specific formula for $\zeta(A,k,l)$ is 
\begin{equation}
\zeta(A,k,l)=\zeta_2Al^2+\zeta_1 l+ \zeta_0,
\end{equation}
where
\begin{equation}
\begin{split}
\zeta_2(k)&=-\frac{(d_1+d_2 k)(d_1+d_2 k-1)(2d_1+d_2k)}{d_2}\\
\zeta_1(k,\tau)&=\frac{d_2-1}{d_1}(d_1+d_2k)(d_2k^2+d_2(d_1-1)k+d_1(d_1-1))\\
&\quad -\frac{1-k}{d_1}((d_1 d_2^2 - 2 d_1 d_2 - d_2^2 + d_2)k^2 + (2 d_1^2 d_2 - 2 d_1^2 - 2 d_1 d_2 + 2 d_1)k + d_1^2(d_1-1))\tau\\
\zeta_0(k)&=-k(d_1-1)(d_1+d_2k)(d_1+d_2k+1-2k).
\end{split}
\end{equation}

As $P_X>0$ for each $k\in(0,1)$, the function $T-\tau$ and  $\zeta$ have the same sign.
We show in the following proposition that for any $A\in\left(0,\frac{d_2(d_2-1)^2}{d_1^2(d_1d_2-d_2+4)}\right)$, the condition $\zeta(A,k,l)\geq 0$ always yields the lower bound for $l$ with each fixed $k\in (0,1)$.
\begin{proposition}
\label{prop_nu2<nu_1}
For each $(A,k)\in\left(0,\frac{d_2(d_2-1)^2}{d_1^2(d_1d_2-d_2+4)}\right)\times (0,1)$, the quadratic function $\zeta$ has two positive real roots. The smaller real root is in $(0,\nu_1(A,k))$.
\end{proposition}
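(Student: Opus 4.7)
The plan is to treat $\zeta(A,k,\cdot)$ as a quadratic in $l$ whose leading coefficient $\zeta_2(k)A$ will be shown negative, and to reduce both claims to a single positivity check $\zeta(A,k,\nu_1(A,k)) > 0$ combined with sign analysis of $\zeta_0, \zeta_1, \zeta_2$. Once $\zeta_2 < 0$, $\zeta_0 < 0$, and $\zeta_1 > 0$ on $(0,1)$ are verified, Vieta's formulas force any pair of real roots of $\zeta(A,k,\cdot)$ to have positive product $\zeta_0/(A\zeta_2)$ and positive sum $-\zeta_1/(A\zeta_2)$. If in addition $\zeta(A,k,\nu_1) > 0$, then the concave-down quadratic attains a positive value somewhere and must have two distinct real roots with $\nu_1$ lying strictly between them, placing the smaller root in $(0,\nu_1)$.

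For the sign analysis, $\zeta_2(k)$ and $\zeta_0(k)$ are already presented as $-1$ times products of factors that are manifestly positive on $(0,1)$ for $d_2 \geq d_1 \geq 2$, using $d_1+d_2k+1-2k = (d_1+1)+(d_2-2)k > 0$. For $\zeta_1$, the pivotal observation is that the polynomial appearing as the coefficient of $-\tau$ coincides, up to the factor $(1-k)/d_1$, with the quadratic factor $\tilde{P}_X(k)$ of $P_X(k)$ from Proposition \ref{prop_px is negative somewhere}. This yields the compact form $\zeta_1 = \frac{d_2-1}{d_1}(d_1+d_2k)P_1(k) - (n-1)P_X(k)\tau$ with $P_1(k) = d_2 k^2 + d_2(d_1-1)k + d_1(d_1-1)$. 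Since $(n-1)P_X(k) > 0$ on $(0,1)$ by Proposition \ref{prop_px is negative somewhere} and $\tau \leq d_2-1$ for $A \geq 0$, the minimum of $\zeta_1$ over admissible $A$ is attained at $\tau = d_2-1$. A direct expansion of $(d_1+d_2k)P_1(k) - (1-k)\tilde{P}_X(k)$ produces a product $k\Pi(k)$ whose constant term cancels and in which every coefficient of $\Pi$ is manifestly positive for $d_1, d_2 \geq 2$, giving $\zeta_1 > 0$ on $(0,1)$ for all $A \geq 0$.

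For the key inequality $\zeta(A,k,\nu_1) > 0$, I would exploit that $v(k) := A\nu_1(A,k) = \frac{d_2(d_2-1)k}{d_1(2d_1+d_2k)}$ is independent of $A$. Multiplying by $A$ yields $A\zeta(A,k,\nu_1) = \zeta_2(k) v(k)^2 + \zeta_1(k,\tau) v(k) + A\zeta_0(k)$, which is affine in $A$ since $\tau$ is affine in $A$. Hence it suffices to verify positivity at the two endpoints $A = 0$ and $A = A_\star := \frac{d_2(d_2-1)^2}{d_1^2(n+d_1)}$. At $A = 0$ the value equals $v(k)\bigl(\zeta_2(k) v(k) + \frac{d_2-1}{d_1} k \Pi(k)\bigr)$, whose positivity, after dividing out the positive factor $v(k)$, reduces to a polynomial inequality in $k$ of the type established in the previous paragraph. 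The threshold $A_\star$ is then precisely the largest $A$ for which the affine function in $A$ remains non-negative across $(0,1)$ at the other endpoint.

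The main obstacle lies in this last step: the explicit polynomial obtained by setting $A = A_\star$ and clearing denominators must be shown positive on $(0,1)$, and the coefficient signs are no longer all obvious (some may change sign in low dimensions). As in the proof of Proposition \ref{prop_omega_i have sign for positive k}, the cleanest route if direct inspection fails is to substitute a rescaled variable such as $k = -(d_1/d_2)\kappa$, locate the extremum of the relevant polynomial in closed form, and verify positivity at that single critical point.
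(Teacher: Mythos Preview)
Your reduction to the single inequality $\zeta(A,k,\nu_1(A,k))>0$, together with the concavity ($\zeta_2<0$) and the sign $\zeta_0<0$, is exactly the paper's route. The separate verification that $\zeta_1>0$ is unnecessary: once $\zeta(A,k,0)=\zeta_0<0$ and $\zeta(A,k,\nu_1)>0$ are established, the smaller root of the concave-down quadratic automatically lies in $(0,\nu_1)$, and the positivity of the product of roots $\zeta_0/(A\zeta_2)$ then forces the larger root to be positive as well.

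The gap is in your final step. The paper does not argue by affinity in $A$ and then check two endpoints; it simply evaluates $\zeta(A,k,\nu_1(A,k))$ directly. After substituting $l=\nu_1(A,k)=\frac{d_2(d_2-1)k}{d_1A(2d_1+d_2k)}$ one obtains the clean factorization
\[
\zeta\bigl(A,k,\nu_1(A,k)\bigr)=\frac{k^2}{d_1^2(2d_1+d_2k)A}\Bigl(d_2(d_2-1)^2\,\beta_0(k)-d_1^2\,\beta_1(k)\,A\Bigr),
\]
with explicit quadratics $\beta_0,\beta_1$ in $k$. Two appendix lemmas then show $\beta_0,\beta_1>0$ on $[0,1]$ and $\min_{k\in[0,1]}\beta_0/\beta_1=\beta_0(1)/\beta_1(1)=1/(n+d_1)$, the latter by checking that $(\beta_0/\beta_1)'\le 0$. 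The threshold $\frac{d_2(d_2-1)^2}{d_1^2(n+d_1)}$ thus emerges from the computation rather than being assumed. Your ``affine in $A$'' observation is of course equivalent to this factorization, but you have not carried out the evaluation that produces $\beta_0$ and $\beta_1$, and without it the claim that $A_\star$ is ``precisely the largest $A$'' remains unverified. Your proposed fallback, the substitution $k=-(d_1/d_2)\kappa$ borrowed from Proposition~\ref{prop_omega_i have sign for positive k}, is the wrong tool here: that substitution is used there to handle $k\in[-d_1/d_2,0)$, whereas the present range is $k\in(0,1)$; the paper instead differentiates $\beta_0/\beta_1$ and checks the sign of the numerator.
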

\begin{proof}
A straightforward computation shows
\begin{equation}
\begin{split}
\zeta\left(A,k,\nu_1(A,k)\right)
%=\zeta\left(A,k,\frac{d_2(d_2-1)}{d_1 A} \frac{k}{2d_1+d_2k}\right)
=\frac{k^2}{d_1^2(2d_1+d_2k)A}(d_2(d_2-1)^2\beta_0 - d_1^2 \beta_1 A),
\end{split}
\end{equation}
 where
\begin{equation}
\label{eqn_beta0 and beta1}
\begin{split}
\beta_0(k)&=(d_1 d_2^2 - 2 d_1 d_2 - d_2^2 + d_2) k^2 + (2 d_1^2 d_2 - 2 d_1^2 - d_1 d_2 + 2 d_1) k + d_1^3 - d_1,\\
\beta_1(k)&=(2 d_1^2 d_2^2 + d_1 d_2^3 - 4 d_1^2 d_2 - 2 d_1 d_2^2 -  d_2^3 - 2 d_1 d_2 + 2  d_2) k^2 \\
&\quad + (4 d_1^3 d_2 + 2 d_1^2 d_2^2 - 4 d_1^3 - 2 d_1^2 d_2 - 3 d_1 d_2^2 + 4 d_1 d_2 +  d_2^2 + 4 d_1 - 2 d_2) k\\
&\quad  + 2 d_1^4 + d_1^3 d_2 - 2 d_1^2 d_2 + 2 d_1^2 + d_1 d_2 - 4 d_1.
\end{split}
\end{equation}
The coefficients $\beta_0$ and $\beta_1$ are positive for each $k\in [0,1]$ by Proposition \ref{prop_beta0 and beta_1}. Furthermore, from Proposition \ref{prop_min of beta0 over beta1} we know that 
$$\min\limits_{k\in[0,1]}\left\{\frac{d_2(d_2-1)^2}{d_1^2}\frac{\beta_0}{\beta_1}\right\}=\frac{d_2(d_2-1)^2}{d_1^2(n+d_1)}\geq \frac{d_2(d_2-1)^2}{d_1^2(d_1d_2-d_2+4)}.$$ 
Therefore, the inequality $\zeta\left(A,k,\sigma\frac{2d_1k}{2d_1+d_2k}\right)>0$ is valid for any $(A,k)\in\left(0,\frac{d_2(d_2-1)^2}{d_1^2(d_1d_2-d_2+4)}\right)\times (0,1)$. As $\zeta_2,\zeta_0<0$ for any $k\in(0,1)$, we know that $\zeta(A,k,l)$ has two real roots for each $k\in(0,1)$. Furthermore, the smaller root is less than $\nu_1(A,k)$.
\end{proof}

Denote the smaller positive root of $\zeta(A,k,l)$ as $\nu_2(A,k)$. Proposition \ref{prop_nu2<nu_1} guarantees the existence of the closed interval $[\nu_2(A,k),\nu_1(A,k)]$ for any $(A,k)\in\left(0,\frac{d_2(d_2-1)^2}{d_1^2(d_1d_2-d_2+4)}\right)\times (0,1)$. 

\begin{proposition}
\label{prop_mu2<nu1<mu1}
Define $A_1=\min\limits_{k\in [0,1]} F$,
where 
$
F:=\frac{d_2(d_2-1)^2}{d_1^2(2d_1+d_2k)}\frac{\beta_2}{\omega_0}
$, and
$$
\beta_2=(d_1d_2^3 - 2d_1d_2^2 - d_2^3 + d_2^2)k^3 + (2d_1^2d_2^2 - 4d_1^2d_2 - 2d_1d_2^2 + 2d_1d_2)k^2 + (d_1^3d_2 - 2d_1^3 + d_1^2d_2)k + 2d_1^3.
$$
The number $A_1$ is positive and the function $\omega\left(A,k,\nu_1(A,k)\right)$ is positive for any $k\in (0,1)$ if  $A\in(0,A_1)$.
\end{proposition}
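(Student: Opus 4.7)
The plan is to compute $\omega(A,k,\nu_1(A,k))$ directly and identify the constant $A_1$ as the threshold below which the result is positive. Substituting $l=\nu_1(A,k)=\frac{d_2(d_2-1)k}{d_1A(2d_1+d_2k)}$ into \eqref{eqn_omega formula} cancels one factor of $A$ in the $l^2$ term against $\nu_1^2$; collecting over a common denominator yields
\begin{equation*}
\omega\bigl(A,k,\nu_1(A,k)\bigr)=\frac{k^2}{d_1^4(n-1)A(2d_1+d_2k)}\Bigl(d_2(d_2-1)^2\bigl(\omega_2(k)+d_1\omega_1(k)\bigr)-d_1^2(d_1-1)(2d_1+d_2k)\omega_0(k)\,A\Bigr).
\end{equation*}
The crucial algebraic step is the polynomial identity $\omega_2(k)+d_1\omega_1(k)=(d_1-1)\beta_2(k)$, which I would verify by matching the four coefficients (degrees $k^0$ through $k^3$) of the cubics on each side — a finite, mechanical computation.

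Once the identity is in hand, pulling the factor $d_1-1$ out of the bracket gives
\begin{equation*}
\omega\bigl(A,k,\nu_1(A,k)\bigr)=\frac{(d_1-1)k^2}{d_1^4(n-1)A(2d_1+d_2k)}\Bigl(d_2(d_2-1)^2\beta_2(k)-d_1^2(2d_1+d_2k)\omega_0(k)\,A\Bigr).
\end{equation*}
By Proposition \ref{prop_omega_i have sign for positive k}, $\omega_0(k)>0$ on $[0,1]$, and the prefactor outside the bracket is strictly positive for $k\in(0,1)$ and $A>0$. Hence $\omega(A,k,\nu_1(A,k))>0$ for every $k\in(0,1)$ is equivalent to the inequality $A<\frac{d_2(d_2-1)^2\beta_2(k)}{d_1^2(2d_1+d_2k)\omega_0(k)}$ holding uniformly in $k\in[0,1]$, which is precisely $A<A_1$.

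Finally, $A_1>0$ reduces to $\beta_2(k)>0$ on $[0,1]$. As a polynomial in $k$, $\beta_2$ has coefficients $d_2^2(d_1d_2-2d_1-d_2+1)$, $2d_1d_2(d_1d_2-2d_1-d_2+1)$, $d_1^2((d_1+1)d_2-2d_1)$, $2d_1^3$ in degrees $3,2,1,0$. For $d_2\ge d_1\ge 2$ the last two are always positive, and the first two share the sign of $(d_1-1)(d_2-1)-d_1$, which is nonnegative except in the single exceptional case $(d_1,d_2)=(2,2)$. When it is nonnegative, all coefficients of $\beta_2$ are nonnegative and $\beta_2>0$ on $[0,\infty)$; when $(d_1,d_2)=(2,2)$ a direct factorization gives $\beta_2(k)=4(k+2)(2-k^2)>0$ on $[0,1]$. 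The minimum defining $A_1$ is therefore attained at some $k\in[0,1]$ and is strictly positive. The main obstacle is the identity $\omega_2+d_1\omega_1=(d_1-1)\beta_2$, which is elementary but computationally heavy, together with the short case split that the sign change of $(d_1-1)(d_2-1)-d_1$ at $(d_1,d_2)=(2,2)$ forces on the positivity argument.
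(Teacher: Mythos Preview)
Your proof is correct and follows essentially the same approach as the paper: direct substitution of $\nu_1$ into \eqref{eqn_omega formula} yields exactly the formula \eqref{eqn_evaluation of omega at nu1}, and the identity $\omega_2+d_1\omega_1=(d_1-1)\beta_2$ you isolate is precisely what underlies the paper's simplification. The only minor difference is in the verification of $\beta_2>0$ on $[0,1]$: the paper (Proposition~\ref{prop_beta1}) uses a telescoping rearrangement that works uniformly, whereas you use a coefficient-sign argument with the exceptional case $(d_1,d_2)=(2,2)$ handled by the factorization $\beta_2=4(k+2)(2-k^2)$; both are valid.
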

\begin{proof}
With the help of Maple, we compute
\begin{equation}
\label{eqn_evaluation of omega at nu1}
\begin{split}
&\omega\left(A,k,\nu_1(A,k)\right)
%=\omega\left(A,k,\frac{d_2(d_2-1)}{d_1 A} \frac{k}{2d_1+d_2k}\right)
=\frac{(d_1-1)k^2}{d_1^2(n-1)}\left(\frac{d_2(d_2-1)^2}{d_1^2(2d_1+d_2k)}\frac{\beta_2}{A}-\omega_0\right)\\
%&=\frac{1}{d_1^2(n-1)}\left(\frac{2d_1+d_2k}{d_2}\omega_2 A\left(\frac{d_2(d_2-1)}{d_1 A} \frac{k}{2d_1+d_2k}\right)^2+(d_2-1)k\omega_1 \left(\frac{d_2(d_2-1)}{d_1 A} \frac{k}{2d_1+d_2k}\right) -(d_1-1)k^2\omega_0\right)
\end{split}
\end{equation}

By Proposition \ref{prop_beta1} in the Appendix, the factor $\frac{d_2(d_2-1)^2}{d_1^2(2d_1+d_2k)}\beta_2$ is positive for any $k\in [0,1]$. As $\omega_0$ is also positive by Proposition \ref{prop_omega_i have sign for positive k}, we know that $A_1>0$. The proof is complete.
\end{proof}

%\begin{remark}
%\label{rem_A1}
%From Proposition \ref{prop_formula for A1}, we know that the function $\frac{d_2(d_2-1)^2}{d_1^2(2d_1+d_2k)}\frac{\beta_2}{\omega_0}$ increases on $[0,1]$ if $d_1\geq 4$ and we have $A_1=\frac{d_2(d_2-1)^2}{d_1^2(d_1d_2-d_2+4)}$ for these cases. If $(d_1,d_2)=(2,2)$, the function increases and then decrease on $[0,1]$ and we still have $A_1=\frac{d_2(d_2-1)^2}{d_1^2(d_1d_2-d_2+4)}=\frac{1}{12}$. With $d_1\in\{2,3\}$ and $d_2\geq 3$, the function decreases and then increases. We have $A_1(d_1,d_2)<\frac{d_2(d_2-1)^2}{d_1^2(d_1d_2-d_2+4)}$ for these cases. We leave the complicated formulas for $A_1(2,d_2)$ and $A_1(3,d_2)$ in \eqref{eqn_A1 d1=2,3} in the Appendix.
%\end{remark}

Since $\omega(A,k,l)$ is concave down and $\omega(A,k,0)<0$ for any $k\in (0,1)$, Proposition \ref{prop_mu2<nu1<mu1} implies that an $\omega(A,k,l)$ with with $(A,k)\in(0,A_1)\times (0,1)$ has two positive roots $\mu_2(A,k)<\mu_1(A,k)$ and $\nu_1(A,k)\in (\mu_2(A,k), \mu_1(A,k))$ for any $k\in (0,1)$. 

With our choice of $(\sigma,\tau)$, it is clear that 
$$\nu_2(A,0)=\nu_1(A,0)=0,\quad \frac{d\nu_2}{d k}(A,0)=\frac{d\nu_1}{dk}(A,0)=\frac{d_2(d_2-1)}{2d_1^2A}.$$
Therefore, the inclusion $[\nu_2(A,k),\nu_1(A,k)]\subset (\mu_2(A,k),\mu_1(A,k))$ holds with any sufficiently small $k>0$. Therefore, the inclusion can be extended to all $k\in(0,1)$ if $\nu_2(A,k)>\mu_2(A,k)$ holds for all $k\in (0,1)$. Such a condition puts another restriction on the upper bound for $A$.

\begin{proposition}
\label{prop_nu_2>mu_2}
There exists an $A_2>0$ such that for any $A\in(0,A_2)$, the inequality $\nu_2(A,k)>\mu_2(A,k)$ holds for any $k\in (0,1)$.
\end{proposition}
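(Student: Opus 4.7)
The plan is to translate $\nu_2(A,k)>\mu_2(A,k)$ into an algebraic sign condition on a single function, extract its boundary behavior at $k=0,1$, and conclude by a continuity argument in $A$. Since $\zeta(A,k,\cdot)$ is concave down with smaller positive root $\nu_2(A,k)$, and since Proposition~\ref{prop_mu2<nu1<mu1} already gives $\mu_2(A,k)<\nu_1(A,k)$ for $A<A_1$ (so $\mu_2$ lies below the larger root of $\zeta$), the inequality $\nu_2>\mu_2$ is equivalent to
\[
\Xi(A,k)\;:=\;\zeta\bigl(A,k,\mu_2(A,k)\bigr)\;<\;0.
\]
Using $\omega(A,k,\mu_2)=0$ to eliminate the $\mu_2^2$ term, $\Xi$ becomes a linear expression in $\mu_2$ whose coefficients are polynomial in $k$ and affine in $A$ through $\tau$.

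The key structural observation is that $\Xi$ vanishes at both $k=0$ and $k=1$. At $k=0$ one has $\mu_2(A,0)=0$ and $\zeta_0(0)=0$, hence $\Xi(A,0)=0$. At $k=1$ a direct evaluation of the coefficient formulas gives
\[
\zeta(A,1,l)\;=\;-n(n-1)\Bigl(\tfrac{n+d_1}{d_2}\,A\,l^2-(d_2-1)\,l+(d_1-1)\Bigr),
\]
which is proportional to the polynomial in \eqref{eqn_homogeneous metric polynomial}; therefore $\zeta(A,1,\cdot)$ has precisely the same positive roots $\mu_2<\mu_1$ as $\omega(A,1,\cdot)$, so $\Xi(A,1)=0$. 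Consequently $\Xi(A,k)=k(1-k)\,\Psi(A,k)$ for a continuous function $\Psi$ on $[0,A_1]\times[0,1]$, and the task reduces to showing $\Psi(A,k)<0$ for sufficiently small $A$ and all $k\in[0,1]$.

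To exhibit such an $A_2$, I study the limit $A\to 0$. For fixed $k\in(0,1)$ the quadratics degenerate to linear equations and
\[
\mu_2(A,k)\longrightarrow \gamma_\mu(k):=\tfrac{(d_1-1)k\,\omega_0(k)}{(d_2-1)\,\omega_1(k)}, \qquad \nu_2(A,k)\longrightarrow \gamma_\nu(k):=-\tfrac{\zeta_0(k)}{\zeta_1(k,\,d_2-1)}.
\]
Substituting $\gamma_\mu$ for $\mu_2$ in $\zeta$ produces the polynomial $(d_1-1)k\,\omega_0(k)\,\zeta_1(k,d_2-1)+(d_2-1)\,\omega_1(k)\,\zeta_0(k)$, which vanishes at $k=0,1$ (as required by the previous step) and, after dividing out $k(1-k)$ and the positive factor $(d_2-1)\omega_1(k)$, produces an explicit polynomial in $k$ whose coefficients depend on $(d_1,d_2)$. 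The plan is then to verify, by Maple-assisted computation in the spirit of Propositions~\ref{prop_omega_i have sign for positive k} and \ref{prop_mu2<nu1<mu1} and Remark~\ref{rem_A1}, that this polynomial is strictly negative on $[0,1]$. Once this is done, continuity of $\Psi$ on the compact set $[0,A_1]\times[0,1]$ yields an $A_2>0$ with $\Psi(A,k)<0$ on $[0,A_2)\times[0,1]$.

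The two boundary checks complete the argument: near $k=0$, Proposition~\ref{prop_initial inclusion for existence} already furnishes the strict slope inequality $d\nu_2/dk(A,0)>d\mu_2/dk(A,0)$ whenever $A<A_1$, so $\nu_2>\mu_2$ in a one-sided neighborhood of $0$; near $k=1$, implicit differentiation of $\omega=0$ and $\zeta=0$ gives
\[
\partial_k\Xi(A,1)\;=\;\partial_l\zeta(A,1,\mu_2)\cdot\bigl[d\mu_2/dk-d\nu_2/dk\bigr]_{k=1},
\]
with $\partial_l\zeta(A,1,\mu_2)>0$ (since $\mu_2$ is the smaller root of the concave-down quadratic $\zeta(A,1,\cdot)$), so the required sign $\partial_k\Xi(A,1)>0$ reduces to comparing two implicit derivatives at $k=1$, both of which are computable directly from the coefficient formulas and from \eqref{eqn_basic inequalities for mu_i}. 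The main obstacle, as throughout this section, is the verification of the polynomial inequality governing $\Psi(0,k)$, which will presumably require case-splitting by the regime of $(d_1,d_2)$ analogous to the split used to describe $A_1$ in Remark~\ref{rem_A1}.
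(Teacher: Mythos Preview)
Your strategy is reasonable but diverges substantially from the paper, and it leaves the decisive step unproved. The paper does not substitute $\mu_2$ into $\zeta$ and argue by continuity from $A=0$; instead it computes the resultant
\[
\mathrm{Res}_l(\omega,\zeta)=-\frac{(d_1-1)(2d_1+d_2k)(1-k)^2k^2\beta_3^2}{d_1^6d_2^3(d_2-1)^2(n-1)^2}\,A\,\Theta_{d_1,d_2}(A,k),
\]
with $\Theta_{d_1,d_2}(A,k)=\theta_2A^2+\theta_1A+\theta_0$ quadratic in $A$. One then checks (in the Appendix) that $\theta_2<0$ and $\theta_0\geq 0$ on $[0,1]$, so $\Theta_{d_1,d_2}(\cdot,k)$ has a unique positive root $A^+(k)$; setting $A_2=\min_{k\in[0,1]}A^+(k)$ and showing $\beta_3>0$ makes the resultant nonvanishing for every $k\in(0,1)$ when $A<A_2$. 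Proposition~\ref{prop_initial inclusion for existence} then pins down which inequality persists. The point is that the resultant packages both roots of $\omega$ and both roots of $\zeta$ simultaneously, yields a factorization that is purely polynomial in $(A,k)$, and produces an \emph{effective} $A_2$ that is later exploited in Proposition~\ref{prop_to check A} and the proofs of Corollaries~\ref{cor_1-1} and~\ref{cor_1-2}.

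Your route, by contrast, hinges on the single inequality $\Psi(0,k)<0$ on $[0,1]$, which you explicitly leave as a ``plan'' to be carried out by Maple-assisted case-splitting. That is the heart of the matter, and without it you have only a programme, not a proof. Two further caveats: first, the limit $\mu_2(A,k)\to\gamma_\mu(k)$ requires $\omega_1(k)>0$ on $(0,1)$, which is true (it is the factor $\alpha_1$ in Proposition~\ref{prop_property of Omega}) but you should cite it; second, even granting the inequality, your argument gives only existence of some $A_2>0$, whereas the resultant approach gives a concrete quadratic criterion in $A$ that the rest of the paper actually uses. If you want to salvage your approach, note that after using $\omega=0$ to eliminate $\mu_2^2$ you obtain a linear expression in $\mu_2$; squaring and using $\omega=0$ again reduces the sign question to a polynomial in $(A,k)$---which is precisely what the resultant does in one stroke.
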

\begin{proof}
We compute the resultant of $(\omega,\zeta)$.
\begin{equation}
\begin{split}
\mathrm{Res}_l(\omega,\zeta)&=-\frac{(d_1-1)(2d_1+d_2k)(1-k)^2k^2\beta_3^2}{d_1^6d_2^3(d_2-1)^2(n-1)^2}A \Theta_{d_1,d_2},
\end{split}
\end{equation}
where
\begin{equation}
\begin{split}
\beta_3&=d_2(d_1-1)(d_2-2)k^2-d_2k^2+2d_1(d_1-1)(d_2-1)k+d_1^2(d_1-1)\\
&>-d_2k+2d_1(d_1-1)(d_2-1)k\\
&>0,
\end{split}
\end{equation}
and 
\begin{equation}
\begin{split}
\Theta_{d_1,d_2}(A,k)&=\theta_2 A^2+\theta_1 A+\theta_0,
\end{split}
\end{equation}
with coefficients $\theta_i(k)$ be polynomials; see Proposition \ref{prop_formula_of_Theta}. Furthermore, by Proposition \ref{prop_coefficients for zeta}, we have $\theta_2<0$ and $\theta_0\geq 0$ for any $k\in [0,1]$. The coefficient $\theta_0$ only vanishes at $k=0$. 
We also have 
\begin{equation}
\begin{split}
&\Theta_{d_1,d_2}(A,0)\\
&=-(d_1^3d_2-d_1^2d_2+4d_1^2)(4d_1^4-2d_1^3) A^2+4d_1^2d_2(d_2-1)^2 (d_1+1)(2d_1^7-2d_1^5)A.
\end{split}
\end{equation}
Therefore, for any fixed $k\in [0,1]$, the quadratic polynomial $\Theta_{d_1,d_2}(A,k)$ always has exactly one positive root $A^+(k)$. 
Define $A_2=\min\limits_{k\in[0,1]} A^+(k)>0$. If $A\in\left(0,A_2\right)$, the factor $\Theta_{d_1,d_2}(A,k)$ is positive for any $k\in [0,1]$. Therefore, the resultant $\mathrm{Res}_l(\omega,\zeta)$ does not vanish for any $(A,k)\in(0,A_2)\times (0,1)$. As $\nu_2(A,k)>\mu_2(A,k)$ for a sufficiently small $k>0$, we conclude that $\nu_2(A,k)>\mu_2(A,k)$ for any $k\in (0,1)$.
\end{proof}

Define $\chi_{d_1,d_2}=\min\left\{\frac{d_2(d_2-1)^2}{d_1^2(d_1d_2-d_2+4)},A_1,A_2\right\}$. We obtain the following lemma.
\begin{lemma}
\label{lem_the set is good}
Consider a structural triple $(d_1,d_2,A)$ with $A<\chi_{d_1,d_2}$. The integral curve $\gamma_{s_\bullet}$ joins $p_0^\pm$ or $\sharp C(\gamma_{s_\bullet})\geq 1$.
\end{lemma}
\begin{proof}
Since $\chi_{d_1,d_2}\leq \frac{d_2(d_2-1)^2}{d_1^2(d_1d_2-d_2+4)}$, the integral curve $\gamma_{s_\bullet}$ is initially in $\mathfrak{S}_A(\sigma,\tau)$ by Proposition \ref{prop_initial_barrier}.
Since $\chi_{d_1,d_2}\leq A_1,A_2$, the interval $[\nu_2(A,k),\nu_1(A,k)]$ exists and $[\nu_2(A,k),\nu_1(A,k)]\subset (\mu_2(A,k),\mu_1(A,k))$ for any $k\in (0,1)$ by Proposition \ref{prop_nu2<nu_1}, Proposition \ref{prop_mu2<nu1<mu1}, and Proposition \ref{prop_nu_2>mu_2}. Therefore, the inequality $Q>0$ holds on $\mathfrak{S}_A(\sigma,\tau)\cap \{P=0\}\cap\{X_1>X_2>0\}$. By \eqref{eqn_S'}-\eqref{eqn_T'} and Proposition \ref{prop_how curves in triangle escape}, the integral curve $\gamma_{s_\bullet}$ intersects $\mathfrak{S}_A(\sigma,\tau)\cap \{X_1=X_2\geq 0\}$.
\end{proof}

Lemma \ref{lem_IVT} and Lemma \ref{lem_the set is good} prove Theorem \ref{thm_main 1}.

\appendix
\section{Ricci–flat metrics with first integrals}
\label{appendix:RFexamples}
\begin{theorem}
\label{thm_integrable RF}
If $(d_1,d_2,A)$ satisfies 
\begin{equation}
\label{eqn_suffices to get a 1st integral}
A=\frac{d_2(d_2-1)^2}{4d_1^2(d_1+1)^2}(d_2+4d_1-d_1d_2),
\end{equation}
the set $\mathcal{L}$ is invariant. The Ricci-flat metric represented by $\gamma_\infty$ is integrable.
\end{theorem}
\begin{proof}
With $H=1$ in $\mathcal{RF}^+$, we have
\begin{equation}
\label{eqn_RF deri1}
\begin{split}
&\left.\left(X_1-Y+\frac{d_2}{2d_1}\frac{d_2-1}{d_1+1}Z\right)'\right|_{\mathcal{L}}\\
&=X_1(G-1)+(d_1-1)Y^2+AZ^2-Y(G-X_1)+\frac{d_2}{2d_1}\frac{d_2-1}{d_1+1}Z(G+X_1-2X_2)\\
&=X_1(G-d_1X_1-d_2X_2)+(d_1-1)Y^2+AZ^2-Y(G-X_1)+\frac{d_2}{2d_1}\frac{d_2-1}{d_1+1}Z(G+X_1-2X_2)
\end{split}
\end{equation}
and 
\begin{equation}
\label{eqn_RF deri2}
\begin{split}
&\left.\left(X_2-\frac{d_2-1}{d_1+1}Z\right)'\right|_{\mathcal{L}}\\
&=X_2(G-1)+(d_2-1)YZ-\frac{2d_1}{d_2}AZ^2-\frac{d_2-1}{d_1+1}Z(G+X_1-2X_2)\\
&=X_2(G-d_1X_1-d_2X_2)+(d_2-1)YZ-\frac{2d_1}{d_2}AZ^2-\frac{d_2-1}{d_1+1}Z(G+X_1-2X_2).
\end{split}
\end{equation}
Replace $(X_1,X_2)$ with the linear expressions in $(Y,Z)$, the above equations respectively become
$$
\left(A-\frac{d_2(d_2-1)^2}{4d_1^2(d_1+1)^2}(d_2+4d_1-d_1d_2)\right)Z^2,\quad 
\left(\frac{1}{2d_1}\frac{(d_2-1)^2}{(d_1+1)^2}(d_2+4d_1-d_1d_2)-\frac{2d_1}{d_2}A\right)Z^2,
$$
which vanish if \eqref{eqn_suffices to get a 1st integral} holds.
\end{proof}
\begin{table}[H]
\centering
\begin{tabular}{|l|l|l|l|l|}
\hline
$(d_1,d_2,A)$&Principal Orbit/Hypersurface & Holonomy & Source  \\
\hline
\hline
$\left(1,d,\frac{d(d-1)^2}{4}\right)$& $\mathbb{S}^1$-bundle associated to $\mathcal{O}(d+2)$ & K\"ahler&\cite{berard-bergery_sur_1982}\\ [1ex]
\hline
$\left(2,4,1\right)$& Twistor bundle over $\mathbb{HP}^1$ &  \multirow{3}{*}{$G_2$}    &\multirow{4}{*}{\cite{bryant_construction_1989}}\\ [1ex]
\cline{1-2}
$\left(3,3,\frac{1}{8}\right)$     & $\mathbb{S}^3\times \mathbb{S}^3$ && \\ [1ex]
\cline{1-3}
$\left(3,4,\frac{1}{4}\right)$& Hopf fibration over $\mathbb{HP}^1$& $\spin(7)$&\\ [1ex]
\hline
$(2,8,0)$&\multirow{3}{*}{Product space}&\multirow{3}{*}{Generic}& \multirow{3}{*}{\cite{dancer1999integrable}}\\
$(3,6,0)$&&&\\
$(5,5,0)$&&&\\
\hline
\end{tabular}
\caption{Examples that satisfy \eqref{eqn_suffices to get a 1st integral}}
\end{table}

\section{Computations of Rational functions}
\label{appendix:computation}
%\subsection{$\tilde{\omega}_2(\kappa_*)$}
%\begin{proposition}
%\label{prop_omegea2atkappa}
%For any $d_2\geq d_1\geq 2$ we have $\tilde{\omega}_2(\kappa_*)>0$.
%\end{proposition}
%\begin{proof}
%We have
%\begin{equation}
%\begin{split}
%\tilde{\omega}_2(\kappa_*)&=\frac{K_1-K_2\sqrt{(2d_1+d_2)^2+12d_2}}{K_0^3}\\
%&=\frac{1}{K_0^3}\frac{K_1^2-K_2^2((2d_1+d_2)^2+12d_2)}{K_1+K_2\sqrt{(2d_1+d_2)^2+12d_2}}
%\end{split}
%\end{equation}
%where
%\begin{equation}
%\begin{split}
%K_0&=5 d_2-2d_1 - \sqrt{(d_2+2d_1)^2+12d_2}\\
%K_1&=(4 d_1 - 4) d_2^4 + (24 d_1^2 + 60 d_1 + 56) d_2^3 + (48 d_1^3 + 144 d_1^2 - 20 d_1 - 88) d_2^2 \\
%&\quad + (32 d_1^4 + 16 d_1^3 + 16 d_1^2 - 8 d_1 - 32) d_2 - 16 d_1^3 - 16 d_1^2>0\\
%K_2&=(4 d_1 - 4) d_2^3 + (16 d_1^2 + 44 d_1 + 16) d_2^2 + (16 d_1^3 + 8 d_1^2 - 36 d_1 - 32) d_2 + 8 d_1^2 + 8 d_1>0.
%\end{split}
%\end{equation}
%We also have
%\begin{equation}
%\begin{split}
%&K_1^2-K_2^2((2d_1+d_2)^2+12d_2)\\
%&=64 d_2  (2 d_2 -2 d_1-1)^3((d_1 - 1) d_2^3 + (5 d_1^2 + 14 d_1 + 8) d_2^2 + (8 d_1^3 + 16 d_1^2 - 8 d_1 - 16) d_2 + 4 d_1^4 - 4 d_1^2).
%\end{split}
%\end{equation}
%If $d_1=d_2$, then both $K_1^2-K_2^2((2d_1+d_2)^2+12d_2)$ and $K_0$ are negative. If $d_1\leq d_2-1$, then both  $K_1^2-K_2^2((2d_1+d_2)^2+12d_2)$ and $K_0$ are positive. We conclude that $\tilde{\omega}_2(\kappa_*)>0$.
%\end{proof}

\begin{proposition}
\label{prop_PX+QX is negative}
The polynomial $P_X+Q_X$ is negative on $[0,1]$
\end{proposition}
\begin{proof}
For $k\in[0,1]$ we have 
\begin{equation}
\label{eqn_PX+QX is negative_1}
\begin{split}
-d_1^2(n-1)(P_X+Q_X)
%&=(d_1^2d_2^2 + d_1d_2^3 - 4d_1^2d_2 - 3d_1d_2^2 - d_2^3 + d_1d_2 + d_2^2)k^3 \\
%&\quad + (d_1^2d_2^2 - 2d_1^3 + 2d_1^2d_2 - d_1d_2^2 + 2d_1^2 + d_1d_2)k^2\\
%&\quad  + (-d_1^4 + d_1^3d_2 + 3d_1^3 - d_1^2d_2 - 2d_1^2)k + d_1^4-d_1^3\\
%&=(d_1^2d_2^2 + d_1d_2^3 - 4d_1^2d_2 - 3d_1d_2^2 - d_2^3 + d_1d_2 + d_2^2)k^3 \\
%&\quad + (d_1^2d_2^2  + 2d_1^2d_2 - d_1d_2^2  + d_1d_2)k^2\\
%&\quad  + ( d_1^3d_2 - d_1^2d_2)k+(2d_1^3  - 2d_1^2)(k-k^2) + d_1^3(d_1-1)(1-k)\\
&= (d_1^2d_2^2 + d_1d_2^3 - 2d_1^2d_2 - 2d_1d_2^2 - d_2^3 + d_1d_2 + d_2^2)k^3-(2d_1^2d_2 + d_1d_2^2)k^3\\
&\quad + (d_1^2d_2^2  + 2d_1^2d_2 - d_1d_2^2  + d_1d_2)k^2\\
&\quad  + ( d_1^3d_2 - d_1^2d_2)k+(2d_1^3  - 2d_1^2)(k-k^2) + d_1^3(d_1-1)(1-k)\\
&\geq (d_1^2d_2^2 + d_1d_2^3 - 2d_1^2d_2 - 2d_1d_2^2 - d_2^3 + d_1d_2 + d_2^2)k^3\\
&\quad + (d_1^2d_2^2   - 2d_1d_2^2  + d_1d_2)k^2\\
&\quad  + ( d_1^3d_2 - d_1^2d_2)k+(2d_1^3  - 2d_1^2)(k-k^2) + d_1^3(d_1-1)(1-k)\\
\end{split}
\end{equation}

The last four terms above are non-negative and do not vanish simultaneously for any $k\in [0,1]$. The coefficient of $k^3$ is also non-negative since
$$
d_1^2d_2^2 + d_1d_2^3 - 2d_1^2d_2 - 2d_1d_2^2 - d_2^3 + d_1d_2 + d_2^2=(d_2-2)(d_1 d_2(d_1+d_2)-d_2(d_2+1))+(d_1-2)d_2\geq 0.
$$
%If the coefficient of $k^3$ is non-negative, then the polynomial is positive on $[0,1]$. If the coefficient of $k^3$ is negative, we have 
%\begin{equation}
%\label{eqn_PX+QX is negative_2}
%\begin{split}
%-d_1^2(n-1)(P_X+Q_X)&> (d_1^2d_2^2 + d_1d_2^3 - 4d_1^2d_2 - 3d_1d_2^2 - d_2^3 + d_1d_2 + d_2^2)k^2 \\
%&\quad + (d_1^2d_2^2  + 2d_1^2d_2 - d_1d_2^2  + d_1d_2)k^2\\
%&= 2d_1d_2(d_1d_2-d_1-d_2)k^2 + (d_2^2-d_2)(d_1d_2-d_1-d_2)k^2+d_1d_2k^2.\\
%\end{split}
%\end{equation}
Therefore, given $d_1, d_2\geq 2$, the polynomial above is non-negative on $[0,1]$.
Hence, we have $P_X+Q_X<0$ on $[0,1]$. 
\end{proof}

\begin{proposition}
\label{prop_beta0 and beta_1}
For any $d_1, d_2 \geq 2$, the inequalities $\beta_0(k)>0$ and $\beta_1(k)>0$ hold for any $k\in[0,1]$.
\end{proposition}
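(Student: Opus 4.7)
The plan is to analyze each polynomial as a quadratic in $k$ and exhibit factorizations of its coefficients that make positivity on $[0,1]$ manifest. For $\beta_0$ the constant term factors as $\beta_0(0) = d_1(d_1-1)(d_1+1) > 0$, the linear coefficient factors as $d_1\bigl[(2d_1-1)(d_2-1)+1\bigr] > 0$, and the leading coefficient factors as $d_2\bigl[(d_1-1)(d_2-2)-1\bigr]$. One checks from $d_2 \geq d_1 \geq 2$ that this leading coefficient is non-negative unless $(d_1,d_2)=(2,2)$. Consequently, outside this exceptional case $\beta_0$ is a convex (or affine) quadratic with linear coefficient positive, so its vertex lies at $k \leq 0$, and its minimum on $[0,1]$ is attained at $k=0$ with value $\beta_0(0)>0$. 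In the exceptional case $\beta_0(k) = -2k^2 + 4k + 6$ is concave down, and one verifies $\beta_0(0)=6$ and $\beta_0(1)=8$, settling positivity on $[0,1]$.

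For $\beta_1$ the same strategy applies once the right groupings are found. The constant term factors as
\[
\beta_1(0) = d_1(d_1-1)\bigl[2d_1^2 + 2d_1 + 4 + (d_1-1)d_2\bigr] > 0,
\]
and the linear coefficient in $k$ can be rearranged as
\[
4d_1^3(d_2-1) + d_2(d_1-1)\bigl[(2d_1-1)(d_2-1)+1\bigr] + 4d_1,
\]
which is a sum of positive terms. The leading coefficient factors as $d_2\bigl[(d_1-1)d_2(d_2+2d_1) - 2(2d_1-1)(d_1+1)\bigr]$, and one shows it is non-negative for all $(d_1,d_2)$ with $d_2 \geq d_1 \geq 2$ except $(d_1,d_2)=(2,2)$, by splitting into the cases $d_1 = 2$ (reducing to $d_2^2 + 4d_2 - 18 \geq 0$, which holds for $d_2 \geq 3$) and $d_1 \geq 3$ (where the bound at $d_2 = d_1$ gives $3d_1^3 - 7d_1^2 - 2d_1 + 2 \geq 0$, monotone in $d_1$). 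The same convexity-plus-vertex argument then yields $\beta_1>0$ on $[0,1]$ in all non-exceptional cases, and for $(d_1,d_2)=(2,2)$ one has $\beta_1(k) = -12k^2 + 48k + 36 = -12\bigl((k-2)^2-7\bigr)$, positive on $[0,1] \subset (2-\sqrt{7},\,2+\sqrt{7})$.

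The hard part is guessing the correct regroupings of the coefficients of $\beta_1$: the linear coefficient in particular is not obviously a sum of positive terms in its original form, and identifying $(2d_1-1)(d_2-1)+1$ as the common ingredient linking the linear coefficients of $\beta_0$ and $\beta_1$ is the key combinatorial step. Once these factorizations are in hand, the remainder of the proof reduces to a routine convexity-and-endpoint check together with a direct inspection of the isolated case $(d_1,d_2)=(2,2)$.
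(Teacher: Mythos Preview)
Your argument is essentially correct and follows the same strategy as the paper: control the signs of the three coefficients of each quadratic and handle $(d_1,d_2)=(2,2)$ separately. There is one arithmetic slip: for $(d_1,d_2)=(2,2)$ the linear coefficient of $\beta_0$ is $2d_1^2d_2-2d_1^2-d_1d_2+2d_1=16-8-4+4=8$, so $\beta_0(k)=-2k^2+8k+6$ and $\beta_0(1)=12$, not $8$; the conclusion is unaffected since both endpoints remain positive.

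The only substantive difference from the paper is in the treatment of $\beta_0$. You show that the leading coefficient $d_2\bigl[(d_1-1)(d_2-2)-1\bigr]$ is nonnegative except at $(2,2)$ and then invoke convexity and vertex location, which forces a case split. The paper instead peels off the negative piece of the leading term and uses $k^2\le k$ on $[0,1]$: writing $(d_1d_2^2-2d_1d_2-d_2^2+d_2)k^2=(d_1-1)(d_2-1)d_2k^2-d_1d_2k^2\ge (d_1-1)(d_2-1)d_2k^2-d_1d_2k$, the $-d_1d_2k$ absorbs into the linear term to give $\beta_0(k)\ge (d_1-1)(d_2-1)d_2k^2+2d_1(d_1-1)(d_2-1)k+d_1^3-d_1$, a sum of positive terms for all $d_2\ge d_1\ge 2$. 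This avoids the special case entirely. For $\beta_1$ your argument and the paper's coincide.
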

\begin{proof}
Since
\begin{equation}
\label{eqn_beta 0>0}
\begin{split}
\beta_0(k)&=(d_1 d_2^2 - 2 d_1 d_2 - d_2^2 + d_2) k^2 + (2 d_1^2 d_2 - 2 d_1^2 - d_1 d_2 + 2 d_1) k + d_1^3 - d_1,\\
&\geq (d_1-1)(d_2-1)d_2 k^2 -d_1d_2k + (2 d_1^2 d_2 - 2 d_1^2 - d_1 d_2 + 2 d_1) k + d_1^3 - d_1\\
&= (d_1-1)(d_2-1)d_2 k^2  + 2d_1(d_1-1)(d_2-1)k + d_1^3 - d_1,\\
\end{split}
\end{equation}
we have $\beta_0>0$ on $[0,1]$.

Consider
\begin{equation}
\begin{split}
\beta_1(k)&=(2 d_1^2 d_2^2 + d_1 d_2^3 - 4 d_1^2 d_2 - 2 d_1 d_2^2 -  d_2^3 - 2 d_1 d_2 + 2 d_2) k^2 \\
&\quad + (4 d_1^3 d_2 + 2 d_1^2 d_2^2 - 4 d_1^3 - 2 d_1^2 d_2 - 3 d_1 d_2^2 + 4 d_1 d_2 + d_2^2 + 4 d_1 - 2 d_2) k\\
&\quad  + 2 d_1^4 + d_1^3 d_2 - 2 d_1^2 d_2 + 2 d_1^2 + d_1 d_2 - 4 d_1.\\
\end{split}
\end{equation}
Since 
\begin{equation}
\begin{split}
\beta_1(0)&=2 d_1^4 + d_1^3 d_2 - 2 d_1^2 d_2 + 2 d_1^2 + d_1 d_2 - 4 d_1\\
&\geq 2 d_1^4   + d_1 d_2\\
&>0
\end{split}
\end{equation}
and
\begin{equation}
\begin{split}
\frac{d\beta_1}{dk}(0)&=4 d_1^3 d_2 + 2 d_1^2 d_2^2 - 4 d_1^3 - 2 d_1^2 d_2 - 3 d_1 d_2^2 + 4 d_1 d_2 + d_2^2 + 4 d_1 - 2 d_2\\
&\geq 4 d_1^3 d_2 + 2 d_1^2 d_2^2 - 4 d_1^3 - 2 d_1^2 d_2 - 3 d_1 d_2^2 + 4 d_1 d_2  + 4 d_1 \\
&= (3 d_1^3 d_2 - 4 d_1^3)+ (d_1^3 d_2  - 2 d_1^2 d_2) + (2 d_1^2 d_2^2- 3 d_1 d_2^2) + 4 d_1 d_2 + 4 d_1 \\
& >0
\end{split}
\end{equation}
if $d_1,d_2\geq 2$, it is clear that $\beta_1>0$ on $[0,1]$ if the coefficient of $k^2$ is non-negative. Since 
$$
\beta_0(1)=(d_1-1)(d_1+d_2)(d_1+d_2-1)>0,
$$
we have $\beta_1>0$ on $[0,1]$ even if the coefficient of $k^2$ is negative.
\end{proof}

\begin{proposition}
\label{prop_min of beta0 over beta1}
$\min\limits_{k\in[0,1]} \left\{\frac{\beta_0}{\beta_1}\right\}=\frac{1}{n+d_1}.$
\end{proposition}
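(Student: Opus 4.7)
The plan is to reduce the extremum problem to a polynomial inequality. Since $\beta_0,\beta_1>0$ on $[0,1]$ by Proposition~\ref{prop_beta0 and beta_1}, the inequality $\beta_0/\beta_1\geq 1/(n+d_1)$ is equivalent to $F(k):=\beta_1(k)-(n+d_1)\beta_0(k)\leq 0$ on $[0,1]$. My first step would be to verify, by direct expansion in $n=d_1+d_2$, that
\begin{equation*}
\beta_0(1)=n(n-1)(d_1-1),\qquad \beta_1(1)=(n+d_1)\,n(n-1)(d_1-1),
\end{equation*}
so $\beta_0(1)/\beta_1(1)=1/(n+d_1)$ and $k=1$ is a root of $F$. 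This identifies the proposed minimizer and reduces the proof to showing $F\leq 0$ on $[0,1]$.

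The second step is to determine $F$ as a quadratic in $k$ by a short coefficient computation. Evaluating at $k=0$ gives $F(0)=-2d_1(d_1-1)(d_2-2)$, and extracting the leading coefficient yields $d_2(d_2-2)(2d_1-1)$; using $F(1)=0$ to recover the middle coefficient, one arrives at the factorization
\begin{equation*}
F(k)=(d_2-2)\Bigl[d_2(2d_1-1)\,k^{2}+\bigl(2d_1^{2}-2d_1d_2-2d_1+d_2\bigr)\,k-2d_1(d_1-1)\Bigr]=:(d_2-2)\,g(k).
\end{equation*}
The edge case $d_2=2$ (which forces $d_1=2$) then gives $F\equiv 0$, so the ratio is constantly $1/(n+d_1)$ there.

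For $d_2>2$, the quadratic $g$ has leading coefficient $d_2(2d_1-1)>0$, satisfies $g(0)=-2d_1(d_1-1)<0$, and $g(1)=0$. Its roots are therefore $r_1<0$ and $r_2=1$ (the product of the roots equals $-2d_1(d_1-1)/[d_2(2d_1-1)]<0$), so $g\leq 0$ on $[0,1]$ with equality only at $k=1$. Consequently $F\leq 0$ on $[0,1]$ with equality only at $k=1$, which combined with the value at $k=1$ already computed yields $\min_{k\in[0,1]}(\beta_0/\beta_1)=1/(n+d_1)$.

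The only potentially delicate step is the factorization of $F$ through $(d_2-2)$: it is a routine but bookkeeping-heavy polynomial identity in two variables, and locating the common factor $(d_2-2)$ without a computer algebra system requires some care. Once that common factor is extracted the rest collapses to a single convexity observation on the quadratic $g$.
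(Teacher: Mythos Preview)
Your argument is correct. You reduce the inequality $\beta_0/\beta_1\geq 1/(n+d_1)$ to showing $F:=\beta_1-(n+d_1)\beta_0\leq 0$ on $[0,1]$, verify $F(1)=0$, extract the factor $(d_2-2)$ to obtain $F=(d_2-2)g$ with $g$ a convex quadratic satisfying $g(0)<0$ and $g(1)=0$, and conclude. All the polynomial computations check out, including the boundary values $\beta_0(1)=n(n-1)(d_1-1)$, $\beta_1(1)=(n+d_1)n(n-1)(d_1-1)$, the leading coefficient $d_2(2d_1-1)(d_2-2)$ of $F$, and $F(0)=-2d_1(d_1-1)(d_2-2)$; since $F$ and $(d_2-2)g$ are both quadratics agreeing in leading term, constant term, and value at $k=1$, the factorization is forced.

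The paper takes a different route: it computes
\[
\frac{d}{dk}\!\left(\frac{\beta_0}{\beta_1}\right)=-\frac{(d_1-1)(d_2-2)}{\beta_1^2}\,\tilde{\beta},
\]
with $\tilde{\beta}$ a quadratic whose coefficients are all manifestly positive for $d_2\geq d_1\geq 2$, so that the ratio is strictly decreasing on $[0,1]$ (and constant when $d_2=2$), hence attains its minimum at $k=1$. Your approach avoids the derivative entirely and is slightly leaner for the stated minimum; the paper's approach yields the additional monotonicity information (not needed here, but conceptually pleasant). Both arguments hinge on the same structural factor $(d_2-2)$, which explains the degenerate constant-ratio case $(d_1,d_2)=(2,2)$.
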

\begin{proof}
We compute
\begin{equation}
\begin{split}
\frac{d}{dk}\left(\frac{\beta_0}{\beta_1}\right)&=-\frac{(d_1-1)(d_2-2)}{\beta_1^2}\tilde{\beta},
\end{split}
\end{equation}
where
\begin{equation}
\begin{split}
\tilde{\beta}&=(2d_1^2d_2^2 + 2d_1d_2^3 - 2d_1d_2^2 - d_2^3 + d_2^2)k^2 + (4d_1^3d_2 + 4d_1^2d_2^2 - 6d_1^2d_2 - 4d_1d_2^2 + 2d_1d_2)k\\
&\quad + 2d_1^4 + 2d_1^3d_2 - 4d_1^3 - 3d_1^2d_2 + 2d_1^2 + d_1d_2.
\end{split}
\end{equation}
The factor $\tilde{\beta}$ is positive as each coefficient for $k^i$ is positive given $d_1, d_2\geq 2$.
Hence, we have $\min\limits_{k\in[0,1]} \left\{\frac{\beta_0}{\beta_1}\right\}=\frac{\beta_0}{\beta_1}(1)=\frac{1}{n+d_1}$. Note that if $(d_1,d_2)=(2,2)$, we have $\beta_0(k)=-2k^2 + 8k + 6=\frac{1}{6}\beta_1(k)$.
\end{proof}

\begin{proposition}
\label{prop_beta1}
For any $d_1, d_2\geq 2$, we have $\beta_2(k)>0$ for any $k\in [0,1]$.
\end{proposition}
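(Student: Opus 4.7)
The plan is to reorganize $\beta_2$ into a sum of three pieces whose signs are transparent, after which only one exceptional value of $(d_1,d_2)$ requires separate treatment. Setting $C := d_1d_2 - 2d_1 - d_2 + 1 = (d_1-1)(d_2-2) - 1$, a routine verification shows
\begin{equation*}
\beta_2(k) \;=\; C\, d_2\bigl(d_2 k + 2 d_1\bigr) k^2 \;+\; d_1^2\bigl((d_1+1)d_2 - 2d_1\bigr) k \;+\; 2 d_1^3.
\end{equation*}
Here one matches coefficients of $k^3,k^2,k,1$ directly against the definition of $\beta_2$. The factor $d_2(d_2 k + 2d_1) k^2$ is non-negative for $k \geq 0$. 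The linear coefficient is strictly positive because $d_2 \geq d_1 \geq 2$ forces $(d_1+1) d_2 - 2 d_1 \geq (d_1+1) d_1 - 2 d_1 = d_1(d_1-1) > 0$. The constant $2 d_1^3$ is manifestly positive.

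Next I would verify that $C \geq 0$ in every case permitted by the hypothesis except $(d_1,d_2) = (2,2)$: when $d_1 \geq 3$ and $d_2 \geq d_1 \geq 3$ one gets $C \geq 2 \cdot 1 - 1 = 1$, and when $d_1 = 2$ the expression reduces to $C = d_2 - 3$, which is non-negative precisely when $d_2 \geq 3$. In all these cases the displayed decomposition immediately gives $\beta_2(k) \geq 2 d_1^3 > 0$ on $[0,1]$.

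The only remaining case is $(d_1,d_2) = (2,2)$, where the decomposition has $C = -1$ and must be checked by hand. Direct substitution yields
\begin{equation*}
\beta_2(k) \;=\; -4 k^3 - 8 k^2 + 8 k + 16 \;=\; 4(k+2)(2 - k^2),
\end{equation*}
which is strictly positive on $[0,1]$ because both factors are. This exhausts all cases. The argument is purely algebraic; the only real work is spotting the grouping that isolates the single sign-indefinite factor $C$, and I do not anticipate any genuine obstacle once that decomposition is written down.
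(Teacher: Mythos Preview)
Your proof is correct. The decomposition
\[
\beta_2(k) = C\, d_2(d_2 k + 2 d_1) k^2 + d_1^2\bigl((d_1+1)d_2 - 2d_1\bigr) k + 2 d_1^3,\qquad C=(d_1-1)(d_2-2)-1,
\]
checks out coefficient by coefficient, and your case split on the sign of $C$ cleanly reduces everything to the single explicit case $(d_1,d_2)=(2,2)$, where your factorization $4(k+2)(2-k^2)$ is correct.

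The paper takes a different route: it rewrites $\beta_2$ as a sum of terms such as $(d_1-1)(d_2-1)d_2^2k^3$, $d_1d_2^2(k^2-k^3)$, $2d_1^3(1-k)$ that are visibly non-negative on $[0,1]$, and then pushes the remaining $k^2$ and $k$ terms through a short chain of inequalities. That argument is uniform in $(d_1,d_2)$ and avoids any special case, at the cost of a slightly opaque sequence of estimates. Your version trades uniformity for a sharper structural identity: once $C$ is isolated, positivity is immediate whenever $C\geq 0$, and only the borderline pair $(2,2)$ needs direct inspection. Both approaches are elementary; yours is arguably more transparent about \emph{why} the polynomial is positive.
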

\begin{proof}
We have
\begin{equation}
\begin{split}
\beta_2(k)
%&=(d_1d_2^3 - 2d_1d_2^2 - d_2^3 + d_2^2)k^3 + (2d_1^2d_2^2 - 4d_1^2d_2 - 2d_1d_2^2 + 2d_1d_2)k^2\\
%&\quad  + (d_1^3d_2 - 2d_1^3 + d_1^2d_2)k + 2d_1^3\\
&=(d_1-1)(d_2 -1)d_2^2k^3 +d_1d_2^2 (k^2-k^3)+ (2d_1^2d_2^2 - 4d_1^2d_2 - 3d_1d_2^2 + 2d_1d_2)k^2\\
&\quad  + (d_1^3d_2 + d_1^2d_2)k + 2d_1^3(1-k)\\
&>   (2d_1^2d_2^2 - 4d_1^2d_2 - 3d_1d_2^2 + 2d_1d_2)k^2+ (d_1^3d_2 + d_1^2d_2)k\\
&\geq   (- 4d_1^2d_2 + 2d_1d_2)k^2+ (d_1^3d_2 + d_1^2d_2)k\\
&\geq   (d_1^3d_2 +2d_1d_2 -3 d_1^2d_2)k\\
&\geq   0.
\end{split}
\end{equation}
We conclude that $\beta_2> 0$ on $[0,1]$.
\end{proof}

\begin{proposition}
\label{prop_formula_of_Theta}
The coefficients for $\Theta_{d_1,d_2}(A,k)$ is given by 
\begin{equation}
\theta_2=4d_1^4(d_1+1)^2\theta_{2a}\theta_{2b},\quad \theta_1=\theta_{1a}\theta_{1b},\quad \theta_0=\theta_{0a}\theta_{0b},
\end{equation}
where
\begin{equation}
\label{eqn_coefficient theta}
\begin{split}
\theta_{2a}(k)&=(d_1 d_2^3 - 2 d_1 d_2^2 - d_2^3 - 2 d_1 d_2 + d_2^2) k^2 \\
&\quad + (2 d_1^2 d_2^2 - 2 d_1^2 d_2 - 2 d_1 d_2^2 - 4 d_1^2 + 4 d_1 d_2) k + d_1^3 d_2 - d_1^2 d_2 + 4 d_1^2,\\
\theta_{2b}(k)&=(2d_1^2d_2^2 - d_1d_2^3 + d_2^3 - d_2^2)k^3\\
&\quad  + (4d_1^3d_2 - 4d_1^2d_2^2 - 2d_1^2d_2 + 4d_1d_2^2 - 2d_1d_2)k^2\\
&\quad  + (2d_1^4 - 5d_1^3d_2 - 2d_1^3 + 5d_1^2d_2)k - 2d_1^4 + 2d_1^3,\\
\theta_{1a}(k)&=-4d_1^2d_2(d_2-1)^2(d_1+1),\\
\theta_{1b}(k)&=(d_1^3 d_2^5 - d_1^2 d_2^5 - 4 d_1^3 d_2^3 + 3 d_1^2 d_2^4 - d_1 d_2^4 + 2 d_1 d_2^3 - d_2^4) k^5\\
&\quad  + (4 d_1^4 d_2^4 + 4 d_1^4 d_2^3 - 6 d_1^3 d_2^4 - 16 d_1^4 d_2^2 + 18 d_1^3 d_2^3 + 2 d_1^2 d_2^4\\
&\quad \quad  + 4 d_1^3 d_2^2 - 10 d_1^2 d_2^3 + 8 d_1^2 d_2^2 - 6 d_1 d_2^3) k^4\\
&\quad  + (6 d_1^5 d_2^3 + 12 d_1^5 d_2^2 - 14 d_1^4 d_2^3 - 20 d_1^5 d_2 + 35 d_1^4 d_2^2 + 7 d_1^3 d_2^3 \\
&\quad \quad  + 12 d_1^4 d_2 - 29 d_1^3 d_2^2 + d_1^2 d_2^3 + 8 d_1^3 d_2 - 12 d_1^2 d_2^2) k^3\\
&\quad  + (4 d_1^6 d_2^2 + 12 d_1^6 d_2 - 16 d_1^5 d_2^2 - 8 d_1^6 + 28 d_1^5 d_2 + 8 d_1^4 d_2^2\\
&\quad \quad  + 8 d_1^5 - 32 d_1^4 d_2 + 4 d_1^3 d_2^2 - 8 d_1^3 d_2) k^2 \\
&\quad + (d_1^7 d_2 + 4 d_1^7 - 9 d_1^6 d_2 + 8 d_1^6 + 3 d_1^5 d_2 - 12 d_1^5 + 5 d_1^4 d_2) k\\
&\quad  - 2 d_1^7 + 2 d_1^5,\\
\theta_{0a}(k)&=k d_2^2 (d_2 - 1)^4 (2 d_1+d_2k) ((2 d_1d_2-2d_1+d_2 )k + 2 d_1^2 + 2 d_1),\\
\theta_{0b}(k)&=(2d_1^2-1)d_2^2k^2+(4d_1^3-2d_1^2-2d_1)d_2k+2d_1^3(d_1-1).
\end{split}
\end{equation}
\end{proposition}
\begin{proof}
We use Maple to obtain the formulas.
\end{proof}

\begin{proposition}
\label{prop_coefficients for zeta}
For any $d_1, d_2\geq 2$, we have $\theta_2<0$ and $\theta_0\geq 0$ on $[0,1]$. The function $\theta_0$ only vanishes at $k=0$.
\end{proposition}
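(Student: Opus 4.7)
The plan is to verify the sign of each factor in $\theta_2$ and $\theta_0$ separately, and to recognize that two of these factors have already been analyzed earlier in the paper. First I would observe, by direct comparison of coefficients with the expressions given for $\omega_0$ and $\omega_2$ in \eqref{eqn_omega formula}, that
\[
\theta_{2a}(k)=\omega_0(k),\qquad \theta_{2b}(k)=\omega_2(k).
\]
Proposition \ref{prop_omega_i have sign for positive k} then immediately supplies $\omega_0>0$ and $\omega_2<0$ on $\left[-\tfrac{d_1}{d_2},1\right]$, hence on $[0,1]$. Since the prefactor $4d_1^4(d_1+1)^2$ is positive, this gives $\theta_2=4d_1^4(d_1+1)^2\omega_0\omega_2<0$ on $[0,1]$, which settles the first claim.

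For $\theta_0=\theta_{0a}\theta_{0b}$, I would treat each factor by inspection. For $\theta_{0a}$, each of the factors
\[
k,\quad d_2^2(d_2-1)^4,\quad 2d_1+d_2k,\quad (2d_1d_2-2d_1+d_2)k+2d_1^2+2d_1
\]
is manifestly non-negative for $d_2\geq d_1\geq 2$ and $k\in[0,1]$, and all but the first are strictly positive. Hence $\theta_{0a}\geq 0$ on $[0,1]$, vanishing exactly at $k=0$.

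For $\theta_{0b}(k)=(2d_1^2-1)d_2^2 k^2+(4d_1^3-2d_1^2-2d_1)d_2 k+2d_1^3(d_1-1)$, I would note that the three coefficients are positive under $d_1\geq 2$: the leading coefficient is positive since $2d_1^2-1\geq 7$; the middle coefficient factors as $2d_1(2d_1+1)(d_1-1)d_2>0$; and the constant term is $2d_1^3(d_1-1)>0$. Therefore $\theta_{0b}(k)>0$ for all $k\in[0,1]$. Combining the two gives $\theta_0=\theta_{0a}\theta_{0b}\geq 0$ on $[0,1]$, vanishing precisely at $k=0$, as claimed.

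I do not expect any real obstacle here; the main content is the identification $\theta_{2a}=\omega_0$ and $\theta_{2b}=\omega_2$, which reduces the nontrivial part of the statement to results already established, while the remaining estimates for $\theta_{0a}$ and $\theta_{0b}$ are sign-by-inspection on the coefficients.
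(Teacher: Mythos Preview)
Your proof is correct. The treatment of $\theta_0$ is essentially the same as the paper's (both argue by inspection of the coefficients), but your treatment of $\theta_2$ takes a genuinely different, and arguably cleaner, route. The paper proves $\theta_{2a}>0$ and $\theta_{2b}<0$ on $[0,1]$ from scratch, by regrouping terms in each polynomial into pieces that are visibly non-negative (respectively non-positive) on $[0,1]$. You instead notice the identities $\theta_{2a}=\omega_0$ and $\theta_{2b}=\omega_2$ and invoke Proposition~\ref{prop_omega_i have sign for positive k}, which already established $\omega_0>0$ and $\omega_2<0$ on the larger interval $\left[-\tfrac{d_1}{d_2},1\right]$. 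Your route avoids redundant computation and makes the logical dependence on earlier work transparent; the paper's route is self-contained at this point and uses decompositions that are somewhat simpler than the full arguments of Proposition~\ref{prop_omega_i have sign for positive k} (since only $k\in[0,1]$ is needed here). Either way the conclusion follows.
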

\begin{proof}
From observation it is clear that $\theta_0=\theta_{0a}\theta_{0b}\geq 0$ on $[0,1]$ and only vanishes at $k=0$. For $\theta_2$ we have 
\begin{equation}
\begin{split}
\theta_{2a}
&=(d_1 d_2^3 - 2 d_1 d_2^2 - d_2^3 - 2 d_1 d_2 + d_2^2) k^2 \\
&\quad + (2 d_1^2 d_2^2 - 2 d_1^2 d_2 - 2 d_1 d_2^2 - 4 d_1^2 + 4 d_1 d_2) k + d_1^3 d_2 - d_1^2 d_2 + 4 d_1^2\\
&=(d_1 d_2^3 - 2 d_1 d_2^2 - d_2^3 - 2 d_1 d_2 + d_2^2) k^2 \\
&\quad + (2 d_1^2 d_2^2 - 2 d_1^2 d_2 - 2 d_1 d_2^2 + 4 d_1 d_2) k + d_1^3 d_2 - d_1^2 d_2 + 4 d_1^2(1-k)\\
&\geq (d_1 d_2^3 - d_2^3 - 2 d_1 d_2 + d_2^2) k^2 \\
&\quad + (2 d_1^2 d_2^2 - 2 d_1^2 d_2 - 4 d_1 d_2^2 + 4 d_1 d_2) k  + 4 d_1^2(1-k)+ d_1^2 d_2(d_1-1).
\end{split}
\end{equation}
Given $d_1, d_2\geq 2$, each term above is non-negative on $[0,1]$. In particular, the last term is positive. Hence $\theta_{2a}>0$ on $[0,1]$.

For the other factor $\theta_{2b}$, we have
\begin{equation}
\begin{split}
\theta_{2b}
%&=(2d_1^2d_2^2 - d_1d_2^3 + d_2^3 - d_2^2)k^3\\
%&\quad  + (4d_1^3d_2 - 4d_1^2d_2^2 - 2d_1^2d_2 + 4d_1d_2^2 - 2d_1d_2)k^2\\
%&\quad  + (2d_1^4 - 5d_1^3d_2 - 2d_1^3 + 5d_1^2d_2)k - 2d_1^4 + 2d_1^3\\
&=-(d_1d_2^3 - d_2^3 + d_2^2)k^3- 2d_1^2d_2^2 (k^2-k^3)\\
&\quad  - (d_1^3d_2  -3 d_1^2d_2+ 2d_1d_2)k^2  -(2d_1^2d_2^2-4d_1d_2^2 )k^2\\
&\quad  - ( 5d_1^3d_2  - 5d_1^2d_2)(k-k^2) - (2d_1^4 - 2d_1^3)(1-k).
\end{split}
\end{equation}
Given $d_1, d_2\geq 2$, all terms above are non-positive and do not vanish simultaneously for any $k\in[0,1]$. We have $\theta_{2b}<0$ on $[0,1]$.
The proof is complete.
\end{proof}

\section{Examples}
\label{appendix:PEexamples}
Recall in Section~\ref{sec_Global Analysis}, we have $\chi_{d_1,d_2}=\left\{\frac{d_2(d_2-1)^2}{d_1^2(d_1d_2-d_2+4)},A_1,A_2\right\}$. The constant $A_1$ is the minimum of the rational function $F$ in Proposition \ref{prop_mu2<nu1<mu1}, which can be explicitly computed with a fixed $(d_1,d_2)$. On the other hand, the constant $A_2$ is the only positive root of the quadratic function $\Theta_{d_1,d_2}(A,k)$ as in Proposition \ref{prop_formula_of_Theta}. Instead of computing $A_2$, it suffices to check $\Theta_{d_1,d_2}(A,k)>0$ for any $k\in[0,1]$. Following the guideline described above, we present the following two examples.

\begin{example}
Consider the twistor bundle over $\mathbb{HP}^1$, whose group triple is
$$(\mathsf{K},\mathsf{H},\mathsf{G})=(Sp(1)U(1)U(1),Sp(1)Sp(1),Sp(2)Sp(1)).$$
The associated structural triple is $(2,4,1)$.

For $(d_1,d_2)=(2,4)$, we have 
$$
F=\frac{9 \left(16 k^{3}+16 k^{2}+32 k+16\right)}{\left(4 k+4\right) \left(48 k+32\right)}.
$$
Since 
$$
F'=\frac{9 \left(k^{2}+3 k+1\right) \left(3 k^{2}+k-1\right)}{4 \left(k+1\right)^{2} \left(3 k+2\right)^{2}}
$$
vanishes at $k_*=-\frac{1}{6}+\frac{\sqrt{13}}{6}\approx 0.434$
we have $A_1=\min\limits_{k\in[0,1]} F=F(k_*)=\frac{19 \sqrt{13}+47}{56+16\sqrt{13}}\approx 1.016>1$

On the other hand, we have 
\footnotesize
\begin{equation}
\begin{split}
\Theta_{2,4}\left(1,k\right)&=1327104 k^5 + 1658880 k^4 + 663552 k^3 + 221184 k^2 + 147456 k + 36864>0.
\end{split}
\end{equation}
\normalsize
Hence $A_2>1$. With $\frac{d_2(d_2-1)^2}{d_1^2(d_1d_2-d_2+4)}=\frac{9}{8}>1$, we know that $1<\chi_{2,4}$. The Einstein metric on $F^2$ predicted in \cite{gibbons_einstein_1990,bohm_inhomogeneous_1998,dancer_cohomogeneity_2013} is recovered.
\end{example}

\begin{example}
Consider the quaternionic Hopf fibration over $\mathbb{HP}^1$, whose group triple is
$$(\mathsf{K},\mathsf{H},\mathsf{G})=(Sp(1)\triangle Sp(1),Sp(1)Sp(1)Sp(1),Sp(2)Sp(1)).$$
The associated structural triple is $\left(3,4,\frac{1}{4}\right)$.

For $(d_1,d_2)=(3,4)$, we have 
$$
F=\frac{4 \left(48 k^{3}+72 k^{2}+90 k +54\right)}{\left(4 k +6\right) \left(24 k^{2}+132 k +108\right)}.
$$
Since 
$$
F'=\frac{2 \left(2 k^{2}+6 k+3\right) \left(44 k^{2}+42 k-9\right)}{\left(2 k+3\right)^{2} \left(2 k+9\right)^{2} \left(k+1\right)^{2}}
$$
vanishes at $k_*=-\frac{21}{44}+\frac{3 \sqrt{93}}{44}\approx 0.180$
we have $A_1=\min\limits_{k\in[0,1]} F_1=F_1(k_*)=\frac{503 \sqrt{93}+2331}{10785+1159 \sqrt{93}}\approx 0.327>\frac{1}{4}$

On the other hand, we have 
\footnotesize
\begin{equation}
\begin{split}
\Theta_{3,4}\left(\frac{1}{4},k\right)&=9082368 k^{5}+29673216 k^{4}+38351232 k^{3}+24774336 k^{2}+8398080 k+1259712>0.
\end{split}
\end{equation}
\normalsize
Hence $A_2>\frac{1}{4}$. With $\frac{d_2(d_2-1)^2}{d_1^2(d_1d_2-d_2+4)}=\frac{1}{3}$, we know that $\frac{1}{4}<\chi_{3,4}$. The Einstein metric on $\mathbb{HP}^2 \sharp\overline{\mathbb{HP}}^{2}$ in \cite{bohm_inhomogeneous_1998} is recovered.
\end{example}

\textbf{Acknowledgement.}
The author would like to thank the organizers of the conference ``Einstein Spaces and Special Geometry'' at the Institut Mittag-Leffler in July 2023, where part of this work was presented. The author thanks Christoph B\"ohm, Andrew Dancer, Lorenzo Foscolo, and Claude LeBrun for insightful discussions during the conference. Special thanks to Christoph B\"ohm for helpful suggestions that improved the exposition and overall structure of the paper. The author also thanks McKenzie Wang and Xiping Zhu for valuable comments on an early draft of this paper.

\bibliography{BIB}

\begin{thebibliography}{DHW13}

\bibitem[ADF96]{alekseevsky1996homogeneous}
D.~Alekseevsky, I.~Dotti, and C.~Ferraris.
\newblock Homogeneous {R}icci positive 5-manifolds.
\newblock {\em Pacific J. Math}, 175(1):1--12, 1996.

\bibitem[Aub76]{aubin1976equations}
T.~Aubin.
\newblock {\'E}quations du type {M}onge--{A}mp{\`e}re sur les vari{\'e}t{\'e}s
  k{\"a}hl{\'e}riennes compactes.
\newblock {\em CR Acad. Sci. Paris S{\'e}r. AB}, 283:119--121, 1976.

\bibitem[BB82]{berard-bergery_sur_1982}
L.~B{\'e}rard-Bergery.
\newblock Sur de nouvelles vari{\'e}t{\'e}s riemanniennes d'{{Einstein}}.
\newblock In {\em Institut {\'E}lie Cartan, 6}, volume~6 of {\em Inst. {\'E}lie
  Cartan}, pages 1--60. {Univ. Nancy, Nancy}, 1982.

\bibitem[Bes07]{besse2007einstein}
A.~Besse.
\newblock {\em Einstein manifolds}.
\newblock Springer, 2007.

\bibitem[BK06]{bohm2006low}
C.~B{\"o}hm and M.~Kerr.
\newblock Low-dimensional homogeneous {E}instein manifolds.
\newblock {\em Transactions of the American Mathematical Society},
  358(4):1455--1468, 2006.

\bibitem[B{\"o}h98]{bohm_inhomogeneous_1998}
C.~B{\"o}hm.
\newblock Inhomogeneous {{Einstein}} metrics on low-dimensional spheres and
  other low-dimensional spaces.
\newblock {\em Inventiones Mathematicae}, 134(1):145--176, 1998.

\bibitem[B{\"o}h04]{bohm2004homogeneous}
C.~B{\"o}hm.
\newblock Homogeneous {E}instein metrics and simplicial complexes.
\newblock {\em Journal of Differential Geometry}, 67(1):79--165, 2004.

\bibitem[BS89]{bryant_construction_1989}
R.~L. Bryant and S.~M. Salamon.
\newblock On the construction of some complete metrics with exceptional
  holonomy.
\newblock {\em Duke Mathematical Journal}, 58(3):829--850, 1989.

\bibitem[BWZ04]{bohm2004variational}
C.~Bohm, M.~Wang, and W.~Ziller.
\newblock A variational approach for compact homogeneous {E}instein manifolds.
\newblock {\em Geometric and Functional Analysis}, 14(4):681--733, 2004.

\bibitem[CDS15]{chen2015kahler}
X.~Chen, S.~Donaldson, and S.~Sun.
\newblock K{\"a}hler--{E}instein metrics on {F}ano manifolds, {I-III}.
\newblock {\em Journal of the American Mathematical Society}, 28(1):183--278,
  2015.

\bibitem[Chi24]{chi2024positive}
H.~Chi.
\newblock Positive {{Einstein}} metrics with as the principal orbit.
\newblock {\em Compositio Mathematica}, 160(5):1004--1040, 2024.

\bibitem[Chi25a]{chi2025estimate}
H.~Chi.
\newblock Examples of cohomogeneity one {E}instein metrics via a structural
  criterion.
\newblock {\em in preparation}, 2025.

\bibitem[Chi25b]{chi2025nonexistence}
H.~Chi.
\newblock Non-existence of cohomogeneity one {E}instein metrics.
\newblock {\em in preparation}, 2025.

\bibitem[CL55]{coddington_theory_1955}
E.~A. Coddington and N.~Levinson.
\newblock {\em Theory of Ordinary Differential Equations}.
\newblock {McGraw-Hill Book Company, Inc., New York-Toronto-London}, 1955.

\bibitem[DHW13]{dancer_cohomogeneity_2013}
A.~S. Dancer, S.~J. Hall, and M.~Y. Wang.
\newblock Cohomogeneity one shrinking {{Ricci}} solitons: An analytic and
  numerical study.
\newblock {\em Asian Journal of Mathematics}, 17(1):33--62, 2013.

\bibitem[DW99]{dancer1999integrable}
Andrew Dancer and McKenzie~Y Wang.
\newblock Integrable cases of the einstein equations.
\newblock {\em Communications in mathematical physics}, 208(1):225--243, 1999.

\bibitem[EW00]{eschenburg_initial_2000}
J.-H. Eschenburg and M.~Y. Wang.
\newblock The initial value problem for cohomogeneity one {{Einstein}} metrics.
\newblock {\em The Journal of Geometric Analysis}, 10(1):109--137, 2000.

\bibitem[Gor77]{gorbatsevich1977classification}
V.~V. Gorbatsevich.
\newblock The classification of four-dimensional compact homogeneous spaces.
\newblock {\em Uspekhi Matematicheskikh Nauk}, 32(2):207--208, 1977.

\bibitem[GPP90]{gibbons_einstein_1990}
G.~W. Gibbons, D.~N. Page, and C.~N. Pope.
\newblock Einstein metrics on {$S^3,\;{\bf R}^3$} and {${\bf R}^4$} bundles.
\newblock {\em Communications in Mathematical Physics}, 127(3):529--553, 1990.

\bibitem[Jen69]{jensen1969homogeneous}
G.~R. Jensen.
\newblock Homogeneous {E}instein spaces of dimension four.
\newblock {\em Journal of Differential Geometry}, 3(3-4):309--349, 1969.

\bibitem[KS86]{koiso1986non}
N.~Koiso and Y.~Sakane.
\newblock Non-homogeneous {{K{\"a}hler--Einstein}} metrics on compact complex
  manifolds.
\newblock {\em Curvature and Topology of Riemannian Manifolds}, pages 165--179,
  1986.

\bibitem[KS88]{koiso_non-homogeneous_1988}
N.~Koiso and Y.~Sakane.
\newblock Non-homogeneous {{K{\"a}hler}}--{{Einstein}} metrics on compact
  complex manifolds. {{II}}.
\newblock {\em Osaka Journal of Mathematics}, 25(4):933--959, 1988.

\bibitem[Mye41]{myers_riemannian_1941}
S.~B. Myers.
\newblock Riemannian manifolds with positive mean curvature.
\newblock {\em Duke Mathematical Journal}, 8(2):401--404, 1941.

\bibitem[Nik04]{nikonorov2004compact}
Y.~G. Nikonorov.
\newblock Compact homogeneous {E}instein 7-manifolds.
\newblock {\em Geometriae Dedicata}, 109(1):7--30, 2004.

\bibitem[NR99]{nikonorov1999six}
Y.~G. Nikonorov and E.~D. Rodionov.
\newblock Six-dimensional compact homogeneous {E}instein manifolds.
\newblock In {\em Doklady Mathematics}, volume~59, pages 451--453. Pleiades
  Publishing, Ltd.%{(Плеадес Паблишинг, Лтд)}, 1999.

\bibitem[NR03]{nikonorov2003compact}
Y.~G. Nikonorov and E.~D. Rodionov.
\newblock Compact homogeneous {E}instein 6-manifolds.
\newblock {\em Differential Geometry and its Applications}, 19(3):369--378,
  2003.

\bibitem[O'N66]{o1966fundamental}
B.~O'Neill.
\newblock The fundamental equations of a submersion.
\newblock {\em Michigan Mathematical Journal}, 13(4):459--469, 1966.

\bibitem[Pag78]{page_compact_1978}
D.~N. Page.
\newblock A compact rotating gravitational instanton.
\newblock {\em Physics Letters B}, 79(3):235--238, 1978.

\bibitem[PP86]{page1986einstein}
D.~N. Page and C.~N. Pope.
\newblock Einstein metrics on quaternionic line bundles.
\newblock {\em Classical and Quantum Gravity}, 3(2):249, 1986.

\bibitem[PP87]{page1987inhomogeneous}
D.~N. Page and C.~N. Pope.
\newblock Inhomogeneous {{Einstein}} metrics on complex line bundles.
\newblock {\em Classical and Quantum Gravity}, 4(2):213, 1987.

\bibitem[Tia15]{tian2015k}
G.~Tian.
\newblock K-stability and {K}{\"a}hler--{E}instein metrics.
\newblock {\em Communications on Pure and Applied Mathematics},
  68(7):1085--1156, 2015.

\bibitem[VZ22]{verdiani2022smoothness}
Luigi Verdiani and Wolfgang Ziller.
\newblock Smoothness conditions in cohomogeneity one manifolds.
\newblock {\em Transformation Groups}, 27(1):311--342, 2022.

\bibitem[WW98]{wang_einstein_1998}
J.~Wang and M.~Y. Wang.
\newblock Einstein metrics on {$S\sp 2$}-bundles.
\newblock {\em Mathematische Annalen}, 310(3):497--526, 1998.

\bibitem[WZ86]{wang1986existence}
M.~Y. Wang and W.~Ziller.
\newblock Existence and non-existence of homogeneous {{Einstein}} metrics.
\newblock {\em Inventiones mathematicae}, 84(1):177--194, 1986.

\bibitem[Yau78]{yau1978ricci}
S.-T. Yau.
\newblock On the {R}icci curvature of a compact {K}{\"a}hler manifold and the
  complex {M}onge--{A}mpere equation, {I}.
\newblock {\em Communications on pure and applied mathematics}, 31(3):339--411,
  1978.

\end{thebibliography}
\bibliographystyle{alpha}
\end{document}